\documentclass[11pt]{amsart}
\usepackage{amsmath,amssymb,mathrsfs}
\usepackage[shortlabels]{enumitem}
\usepackage[margin=1in]{geometry}
\usepackage{microtype}
\usepackage{xcolor}
\usepackage[colorlinks=true,linkcolor=blue,citecolor=blue,urlcolor=blue]{hyperref}
\numberwithin{equation}{section}
\newtheorem{theorem}{Theorem}[section]
\newtheorem{maintheorem}[theorem]{Theorem}
\newtheorem{proposition}[theorem]{Proposition}
\newtheorem{lemma}[theorem]{Lemma}

\theoremstyle{definition}
\newtheorem{conjecture}[theorem]{Conjecture}

\newtheorem{remark}[theorem]{Remark}
\newcommand{\dd}{\mathrm d}
\newcommand{\dstar}{\mathrm d^*}

\newcommand{\Ric}{\operatorname{Ric}}
\newcommand{\rank}{\operatorname{rank}}
\newcommand{\vol}{\operatorname{vol}}
\newcommand{\id}{\operatorname{id}}

\newcommand{\Hess}{\nabla^2}
\newcommand{\R}{\mathcal R}
\newcommand{\Q}{\mathcal Q}
\newcommand{\ip}[2]{\left\langle #1,#2\right\rangle}
\newcommand{\norm}[1]{\left|#1\right|}
\newcommand{\wt}{\widetilde}

\newcommand{\UW}{\operatorname{UW}}
\begin{document}
\title[Rigidity and flexibility under PIC]{Rigidity and flexibility under positive isotropic curvature}
\author{Tsz-Kiu Aaron Chow}
\address{Department of Mathematics, Hong Kong University of Science and Technology, Hong Kong S.A.R., China}
\email{\href{mailto:chowtka@ust.hk}{chowtka@ust.hk}}
\thanks{T.-K. A. C. is supported  by the Croucher Foundation Start-up Grant and the HKUST New Faculty Start-up Grant. The authors would like to thank Professor Simon Brendle for inspiring discussions.}

\author{Yipeng Wang}
\address{Department of Mathematics, Princeton University, Princeton, NJ 08544, USA}
\email{\href{mailto:yipeng.wang@princeton.edu}{yipeng.wang@princeton.edu}}

\begin{abstract}
For every $n\ge4$ and $L>0$, we construct a smooth $4$-PIC metric on
$S^n$ with Urysohn $1$-width at least $L$ and an embedded stable minimal
disk of intrinsic inradius at least $L$. These examples disprove the
proposed width and stable-disk radius bounds under a positive lower
bound for isotropic curvature. On closed even-dimensional manifolds, we prove the sharp
estimate $\lambda_1^{(2)}\ge(n-1)\sigma/2$ under $\sigma$-PIC and show
that equality forces roundness if a closed eigenform attaining the
bound has rank at least four at some point.
\end{abstract}
\maketitle
\section{Introduction}

A Riemannian manifold $(M^n,g)$, $n\ge4$, has positive isotropic curvature
(PIC) if
\[
R_{1313}+R_{1414}+R_{2323}+R_{2424}-2R_{1234}>0
\]
for every orthonormal four-frame $\{e_1,e_2,e_3,e_4\}$. We say that $g$ is $\sigma$-PIC if the
left-hand side is bounded below by $\sigma>0$, using the convention that
the unit sphere has sectional curvature one. The unit round sphere $S^n$
and the standard cylinder $S^{n-1}\times\mathbb R$ are basic examples,
which are $4$-PIC and $2$-PIC, respectively. In this
paper, we investigate both the rigidity and the flexibility of the PIC
condition.

Positive isotropic curvature was introduced by Micallef--Moore
\cite{MM1988}, who proved that a closed simply connected PIC manifold is
homeomorphic to a sphere. Micallef--Wang \cite{MW1993} subsequently showed
that PIC is preserved under connected sums. In particular, connected sums
of copies of $S^{n-1}\times S^1$ admit PIC metrics. These examples led to
the following conjectures concerning the topology of closed PIC manifolds
\cite{Gromov1996,Schoen2007}.

\begin{conjecture}[Gromov; Schoen]\label{conj:topology}
Let $M^n$ be a closed connected manifold admitting a PIC metric. Then
$\pi_1(M)$ is virtually free. More strongly, a finite cover of $M$ is
diffeomorphic to $S^n$ or to a connected sum of finitely many copies of
$S^{n-1}\times S^1$.
\end{conjecture}

This proposed classification parallels that of closed orientable
three-manifolds with positive scalar curvature. In dimension three,
quantitative geometric estimates provide a route to the topological
restrictions. Gromov--Lawson \cite[\S10]{GL1983} used radius estimates for
stable minimal surfaces to control the filling of curves. Their estimates
imply that the universal cover of a closed three-manifold with scalar
curvature at least one has universally bounded Urysohn $1$-width: it
admits a continuous map to a graph whose fibers have uniformly bounded
diameter.

Gromov \cite[\S3(b)]{Gromov1996} proposed an analogous geometric picture
under a positive lower bound for isotropic curvature. Recall that
\[
\UW_1(X)=\inf_{f:X\to\Gamma}\sup_{y\in\Gamma}
\operatorname{diam}_X f^{-1}(y),
\]
where the infimum is over continuous maps to graphs.

\begin{conjecture}[Gromov]\label{conj:width}
Let $(M^n,g)$ be a closed connected $\sigma$-PIC manifold, where
$\sigma>0$. Then its universal cover satisfies
\[
\UW_1(\widetilde M,g)\le C(n)\sigma^{-1/2}
\]
for a constant $C(n)$ depending only on the dimension.
\end{conjecture}

A related conjecture of Gromov and Schoen
(\cite[\S3(b)]{Gromov1996}, \cite[Conjecture E1$'$]{Schoen2007}) concerns
the size of stable minimal disks.

\begin{conjecture}[Gromov; Schoen]\label{conj:disk-radius}
Let $(M^n,g)$ be a complete $\sigma$-PIC manifold, where $\sigma>0$.
Every compact stable minimal disk $D$ in $M$ satisfies
\[
\sup_{x\in D}d_D(x,\partial D)\le C(n)\sigma^{-1/2},
\]
where $d_D$ is the intrinsic distance and $C(n)$ depends only on the
dimension.
\end{conjecture}

This radius estimate was proposed as a means of obtaining the width
bound in Conjecture~\ref{conj:width}.

Three approaches have played a central role in the study of PIC
manifolds.
\begin{enumerate}[(i)]
\item The first uses minimal two-surfaces, or harmonic maps, with
isotropic curvature entering through the second variation. The work of
Micallef--Moore, Fraser, and Fraser--Wolfson
\cite{MM1988,Fraser2003,FraserWolfson2006} gives restrictions on homotopy
groups and fundamental groups. This approach is analogous to the use of
geodesics under positive Ricci curvature and of minimal hypersurfaces in
three-manifolds with positive scalar curvature.

\item The second approach uses differential two-forms in even dimensions.
Micallef--Wang \cite{MW1993} showed that PIC makes the curvature term in
the Bochner formula positive on two-forms, yielding the vanishing of the
second Betti number; see also Seaman \cite{Seaman1993}. This is a
differential-form counterpart to the minimal-surface method, with the
Bochner identity taking the place of the stability inequality.

\item The third approach is Ricci flow. Hamilton \cite{Hamilton1997} developed
Ricci flow with surgery for PIC four-manifolds, with subsequent work of
Chen--Zhu and Chen--Tang--Zhu leading to the complete classification
\cite{ChenZhu2006,CTZ2012}. Preservation of PIC in all dimensions was
proved independently by Brendle--Schoen and Nguyen
\cite{BrendleSchoen2009,Nguyen2010}. Brendle \cite{Brendle2019} then
established curvature pinching estimates and a surgery theory in
dimensions $n\ge12$, obtaining classification under an incompressibility
hypothesis. As in Hamilton's work in dimension three, preservation and
improvement of curvature under the flow are the essential mechanisms.
\end{enumerate}

Recent results have advanced all three approaches. Fraser--Schoen
\cite{FraserSchoen2025} obtained a quantitative systolic bound for stable
minimal tori under a positive isotropic curvature lower bound. In
\cite{ChowWang}, the authors proved the sharp Micallef--Moore index
bound $n-3$ for nonconstant harmonic two-spheres. Combining this estimate with recent topological and smoothing
results, they proved the finite-cover diffeomorphism conjecture in
dimensions five and six. Using a twisted de Rham--Hodge operator, the first author and Zhu
\cite{ChowZhu2024} established bandwidth and focal-radius estimates under
additional curvature, boundary, and topological hypotheses. In Ricci
flow, Chen \cite{Chen2026} extended Brendle's pinching estimates to
$n\ge9$, and Cho \cite{Cho2026} subsequently treated dimension eight.
Huang \cite{Huang2025} also removed Brendle's additional assumption in higher dimensions.

Our results are twofold. First, we show that the proposed width and
stable-disk radius bounds fail even on spheres.

\begin{maintheorem}\label{thm:flexibility}
For every $n\ge4$ and every $L>0$, there exist a smooth $4$-PIC metric
$g$ on $S^n$ and an embedded compact stable minimal disk
$D\subset(S^n,g)$ such that
\[
\UW_1(S^n,g)\ge L,
\qquad
\sup_{x\in D}d_D(x,\partial D)\ge L.
\]
\end{maintheorem}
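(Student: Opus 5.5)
The plan is to build $g$ as a cohomogeneity-type metric on $S^n$ that contains a huge, almost flat two-dimensional "base" disk, with all the positivity of isotropic curvature carried by very small normal fibers. Write $S^n$ as the join $S^1 * S^{n-2}$. On the open dense set $(0,R)\times S^1\times S^{n-2}$ we use a metric of the form
\[
g=\dd r^2+a(r)^2\,\dd\theta^2+b(r)^2\,\bigl(g_{S^{n-3}}\text{-type fiber metric, twisted by a connection form }\omega\text{ on the base}\bigr).
\]
Here $a(0)=0$, $a'(0)=1$, so the $(r,\theta)$-factor closes up to a disk of radius $\approx R$. Also $b(R)=0$, $|b'(R)|=1$, and $b\le\varepsilon\ll 1$ on most of $[0,R]$, so the fibers close smoothly at $r=R$. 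We choose $a(r)$ essentially linear, so the base is nearly flat, and we take $R\ge CL$.

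\textbf{Why a pure warped product does not suffice, and the role of the twist.} For the untwisted warped product, the mixed curvatures $-b''/b$ and $-a'b'/(ab)$ give the isotropic sums. These are made $\ge 4$ by taking $b$ small and strictly concave near the collar, and the all-fiber terms $(1-b'^2)/b^2$ are enormous. Every orthonormal four-frame meets the fiber directions in at least a two-dimensional subspace, so one expects the curvature-operator computation to reduce to finitely many pair sums of eigenvalues.

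However, the same normal curvatures $R(e_i,\nu,e_i,\nu)$ enter the second variation of the base disk with a negative sign. A totally geodesic base disk of radius $R$ would therefore be unstable. This is the crux.

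The twisting connection $\omega$, whose curvature is a multiple of $\dd\mathrm{vol}_{\text{base}}$, is introduced to resolve this. It produces a nonzero $R_{1234}$ term with $e_1,e_2$ tangent to the disk and $e_3,e_4$ normal, of the sign that helps PIC, namely $-2R_{1234}>0$. It also endows the normal bundle of the disk with curvature. The stability operator
\[
Q(s,s)=\int_D|\nabla^\perp s|^2-\bigl(R(e_1,s,e_1,s)+R(e_2,s,e_2,s)\bigr)
\]
then gains a Landau-level lower bound $\int|\nabla^\perp s|^2\ge c\,|F^\perp|\int|s|^2$ for sections of a twisted bundle with nonvanishing normal curvature $F^\perp$. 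The tuning is as follows: the normal sectional curvatures must be kept of size $O(1)$ rather than huge, the isotropic sum must be pushed to $\ge4$ via the $R_{1234}$ term, and the normal-bundle curvature must beat the normal sectional curvatures.

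The steps, in order, are:
\begin{enumerate}[(1)]
\item Write down the curvature operator of the twisted doubly-warped metric in an adapted frame. This is an O'Neill-type computation for a Riemannian submersion onto the base with totally geodesic-ish fibers.
\item Choose $a,b,\omega$ with explicit profiles and verify $\sigma$-PIC with $\sigma\ge4$. This needs checking in three regions: the interior region, the collar near $r=R$ where the fibers cap off, which we match to a round $S^n$ cap, and the axis $r=0$.
\item Show that the disk $D=\{\text{fiber point fixed}\}$, or its natural analogue, is minimal by symmetry (the fixed set of an isometry) and strictly stable by the Landau-level estimate above.
\item Derive the width bound.
\end{enumerate}

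For the width bound we use the projection $\pi$ to the base. Since $\pi$ is a Riemannian submersion, it is $1$-Lipschitz, and $\pi|_D$ is an isometry onto the base. Hence $d_g\ge d_D$ on $D$. Any continuous $f:S^n\to\Gamma$ restricts to $D$ with fibers whose $d_D$-diameter is at most their $d_g$-diameter. Therefore $\UW_1(S^n,g)\ge\UW_1(D,d_D)\ge cR$, by the Lebesgue covering argument for a nearly flat disk of inradius $R$. The inradius bound $\sup_D d_D(x,\partial D)\ge L$ is immediate, since $D$ is truncated at radius about $R/2$, inside the stable region.

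The main obstacle I expect is step (2) combined with (3): we must verify simultaneously that the twist makes every isotropic combination at least $4$, including mixed frames that partially rotate tangent into normal directions, while leaving the normal curvatures small enough for stability. If the single-$U(1)$ twist is insufficient, the first fallback is to use an $n-2$-dimensional normal bundle with an $SU$/quaternionic-type connection, so that all normal directions are twisted.
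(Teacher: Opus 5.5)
Your construction fails at steps (2)--(3), and the failure is structural rather than a matter of tuning profiles.

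\emph{The twist cannot help PIC.} The isotropic condition is imposed on every orthonormal four-frame, including both $(e_1,e_2,e_3,e_4)$ and $(e_1,e_2,e_3,-e_4)$. With $A=R_{1313}+R_{1414}+R_{2323}+R_{2424}$ it therefore reads $A\ge\sigma+2|R_{1234}|$, as in \eqref{eq:AminusB}. A nonzero $R_{1234}$ only raises what the mixed curvatures must supply.

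\emph{The untwisted warped product is not PIC either.} On frames $(\partial_r,a^{-1}\partial_\theta,V,W)$ the isotropic sum is $-2\Delta_Bb/b$, which is unchanged under $b\mapsto cb$, so making $b$ small is irrelevant. The condition $-\Delta_Bb\ge2b$, with $b>0$ inside and $b=0$ on the boundary circle, forces $\lambda_1^{\mathrm{Dir}}\ge2$ for the base disk (write test functions as $b\psi$). That is impossible for a nearly flat disk of radius $R\gg1$.

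\emph{Stability fails regardless of the twist.} For your totally geodesic disk the Ricci equation gives $|F^\perp|=|R_{1234}|$ with $e_1,e_2$ tangent and $e_3,e_4$ normal. So your requirement that the normal-bundle curvature beat the normal sectional curvatures contradicts $A\ge\sigma+2|F^\perp|$. Take $n=4$, which the theorem must cover. The normal bundle has rank two, so its connection is abelian (the $SU$/quaternionic fallback does not exist) and the rotation $J$ of $N$ is parallel. Hence for compactly supported $V$,
\[
I(V,V)+I(JV,JV)=2\int_D|\nabla^\perp V|^2-\int_D A\,|V|^2\le2\int_D|\nabla^\perp V|^2-\int_D\bigl(\sigma+2|F^\perp|\bigr)|V|^2 .
\]
Your Landau bound $\int|\nabla^\perp V|^2\ge\int|F^\perp||V|^2$ is sharp. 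Take a large nearly flat disk with $F^\perp$ a fixed-sign, bounded multiple of the area form, as you propose. Cutoffs of the Gaussian-localized lowest-Landau-level sections give $\int|\nabla^\perp V|^2\le\int(|F^\perp|+\varepsilon)|V|^2$. The right-hand side is then at most $-(\sigma-2\varepsilon)\int|V|^2<0$, so $D$ is unstable. The same computation applies to every parallel rank-two block of $N$, so no abelian twist works in any dimension. This is the Micallef--Moore complexified second variation: on the isotropic line $\mathrm{span}(e_3+ie_4)$ or its conjugate, the twist yields almost holomorphic sections rather than a spectral gap.

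\emph{The missing idea.} The disk itself, not its normal bundle, must supply a Dirichlet spectral gap independent of its size, so it cannot be nearly flat. The paper takes $g_L=u_L^2(h_L+g_{S^{n-2}})$ with $K_{h_L}=-1$ and $u_L^2=\frac{\tanh^2r_L}{6(1+r^2)}b_L$. It gets $4$-PIC from Lemma~\ref{lem:conformal-product-pic} via $\nabla_h^2(u^{-1})\le u^{-1}h$ and $-\Delta_hu\ge2u^3$, and uses the totally geodesic slice $B\times\{p\}$. That slice has flat normal bundle and normal Ricci curvature $-\Delta_{g_D}u_L/u_L\ge2$. Stability holds because $u_L$ itself is a positive Jacobi field, so $I(V,V)=\sum_a\int u_L^2|\nabla(f_a/u_L)|^2\ge0$. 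The slice is far from flat: its circumferences grow superexponentially in intrinsic radius and its Gauss curvature tends to $-\infty$, which absorbs the positive normal curvature. Its metric still dominates $\frac16d\rho^2+\frac1{24}\rho^2d\theta^2$, so the inradius and Urysohn-width bounds follow by the same retraction-plus-Lipschitz argument as your step (4). That step is fine as stated, but it is attached to a construction that cannot be both PIC and stable.
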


These examples disprove the geometric conjectures while remaining
consistent with the proposed topological classification. Their
construction is motivated by the equality case of a sharp eigenvalue
estimate for the Hodge Laplacian on two-forms. The isotropic curvature lower bound controls the eigenvalue, and
equality forces roundness if a closed two-form satisfying the equality
equations has rank at least four at some point. The rank-two equality equations
suggest the surface models used in the construction.

Write $\Delta_H=dd^*+d^*d$, and let $\lambda_1^{(2)}$ denote its lowest
eigenvalue on two-forms. On a closed even-dimensional PIC manifold,
there are no nonzero harmonic two-forms, so this eigenvalue is positive.
For the unit round sphere and the standard product $S^{n-1}\times S^1$,
the lowest two-form eigenvalues are $2(n-1)$ and $n-1$, respectively,
so both attain the bound below with $\sigma=4$ and $\sigma=2$.

\begin{maintheorem}\label{thm:spectral-rigidity}
Let $(M^n,g)$ be a closed connected $\sigma$-PIC manifold of even
dimension $n\ge4$, where $\sigma>0$. Then
\begin{equation}\label{eq:main-eigenvalue-estimate}
\lambda_1^{(2)}(M,g)\ge\frac{n-1}{2}\sigma.
\end{equation}
If equality holds, the universal cover admits a nonzero closed two-form
$\omega$ satisfying
\[
\Delta_H\omega=\frac{n-1}{2}\sigma\,\omega,
\qquad
\nabla_X\omega=-\frac{1}{n-1}X^\flat\wedge d^*\omega.
\]
If $\rank\omega\ge4$ at some point, then $(M,g)$ is isometric to the
round sphere or real projective space of sectional curvature $\sigma/4$.
\end{maintheorem}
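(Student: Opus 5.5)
The plan is to combine the Bochner--Weitzenb\"ock formula on two-forms with the refined (twistor) Kato decomposition of $\nabla\omega$, using the Micallef--Wang positivity of the curvature term in a quantitative form. For a two-form $\omega$, $\nabla\omega$ splits orthogonally into its exterior part, its interior part and a trace-free twistor part $T\omega$. This gives
\[
|\nabla\omega|^2=|T\omega|^2+\tfrac13|d\omega|^2+\tfrac1{n-1}|d^*\omega|^2 .
\]
The Weitzenb\"ock formula is $\Delta_H=\nabla^*\nabla+\mathcal K$. The first key step is the pointwise algebraic estimate
\[
\langle\mathcal K\omega,\omega\rangle\ge\tfrac{n-2}{2}\sigma|\omega|^2
\]
under $\sigma$-PIC, which is sharp on the unit sphere ($\sigma=4$, $\mathcal K=2(n-2)$). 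I would prove it as Micallef--Wang do: complexify, diagonalize $\omega$ in a unitary-type frame in even dimension, and write $\mathcal K$ as a positive combination of isotropic curvature expressions.

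Next I use the Hodge decomposition. Since $\Delta_H$ commutes with $d$ and $d^*$, and there are no harmonic two-forms, the first eigenvalue is attained either by a closed or by a coclosed eigenform. Integrating the Weitzenb\"ock formula against an eigenform gives
\[
\lambda\int|\omega|^2\ge\int\Bigl(\tfrac13|d\omega|^2+\tfrac1{n-1}|d^*\omega|^2\Bigr)+\tfrac{n-2}{2}\sigma\int|\omega|^2 .
\]
For a closed eigenform this yields $\lambda\frac{n-2}{n-1}\ge\frac{n-2}{2}\sigma$, i.e.\ \eqref{eq:main-eigenvalue-estimate}. For a coclosed eigenform it gives $\lambda\ge\frac34(n-2)\sigma$, which is at least $\frac{n-1}{2}\sigma$ for $n\ge4$, strictly so when $n>4$. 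When $n=4$ the coclosed case can also be extremal; there the Hodge star (locally, or globally on the orientation cover, which is why the statement passes to the universal cover) converts it into a closed eigenform. In the equality case every inequality is an equality. Thus $T\omega=0$ and $d\omega=0$, which is exactly $\nabla_X\omega=-\frac1{n-1}X^\flat\wedge d^*\omega$, and the curvature estimate is also an equality pointwise.

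For rigidity, set $\xi=d^*\omega$. Differentiating the first-order equation, I would use $dd^*\omega=\lambda\omega$ and the commutation formula for $\nabla^2\omega$ to show that $\nabla\xi$ is determined by $\omega$ and $\mathrm{Ric}$. This gives a curvature identity of the form
\[
R(X,Y)\cdot\omega=\tfrac{\sigma}{4}\bigl(\text{action of }X\wedge Y\text{ on }\omega\text{ as on the round sphere}\bigr),
\]
together with a relation expressing $\mathrm{Ric}(\xi)$ in terms of $\xi$. The set where $\rank\omega\ge4$ is open. On it, I expect a linear-algebra argument in a normal form $\omega=\sum a_i e_{2i-1}\wedge e_{2i}$ with at least two nonzero $a_i$, combined with equality in the Micallef--Wang inequality, to force all sectional curvatures to equal $\sigma/4$. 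This step is the main obstacle, since with rank two the identity constrains only part of the curvature tensor, which is consistent with the surface-model examples. If the direct algebra is insufficient, I would first show the metric is Einstein there, then apply equality in the PIC estimate to kill the Weyl tensor.

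Once the metric has constant curvature on an open set, it is real analytic there (Einstein metrics are analytic in harmonic coordinates). Unique continuation, applied to the overdetermined system satisfied by $(\omega,\xi)$ and the curvature, then spreads constant curvature $\sigma/4$ to all of the connected manifold. Finally, the universal cover is a round sphere, and since $n$ is even the only nontrivial group acting freely by isometries is $\mathbb Z_2$. Hence $M$ is isometric to $S^n$ or $\mathbb{RP}^n$ of curvature $\sigma/4$.
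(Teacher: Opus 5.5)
Your argument for the estimate and for the existence of the closed equality form is essentially the paper's: the twistor decomposition, the Bochner bound $\langle\Q(\omega),\omega\rangle\ge\frac{n-2}{2}\sigma|\omega|^2$, and the Hodge star to handle a coclosed extremal form when $n=4$. In that last case you still need to check that $\star$ preserves the twistor equation.

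The rigidity part has a genuine gap. The key step is not only left as an expectation; the intermediate goal you set, constant curvature $\sigma/4$ on the open set where $\rank\omega\ge4$, is false as a local statement. So no pointwise linear-algebra or Einstein-then-Weyl argument there can succeed. Here is a local counterexample.
\begin{itemize}
\item Let $4\le 2r\le n-2$ and $k>1$. On $\{0\le s<\pi/2\}$ take $ds^2+\sin^2 s\,g_{S^{2r-1}}+k^{-1}\cos^2 s\,g_{S^{n-2r}}$, the unit link of the cone $\mathbb R^{2r}\times C(S^{n-2r}(k))$.
\item By the Gauss equation, its curvature is that of the unit sphere plus the restriction of a tensor with nonnegative curvature operator, so it is $4$-PIC.
\item Let $\Omega$ be the parallel K\"ahler form of $\mathbb R^{2r}$ and set $\alpha=\frac12\iota_x\Omega$, with $x$ the position field. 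Then $\nabla_X\alpha=\frac12\iota_X\omega$ and $\nabla_X\omega=-2X^\flat\wedge\alpha$, where $\omega=d\alpha$ has rank $2r$. Hence all the equality equations hold with $\sigma=4$.
\item Yet planes tangent to the $S^{n-2r}$ factor have curvature $1+(k-1)/\cos^2s\ne1$, and the metric is not even Einstein.
\end{itemize}
Your unique-continuation step has the same problem from the other side. To propagate constant curvature off an open set you would need an elliptic system for the metric, and you do not have one.

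The missing idea is to aim locally only for the special Killing equation $\nabla_X\alpha=\frac12\iota_X\omega$, and to get roundness from a global theorem. Your remark that $\nabla\alpha$ is governed by $\omega$ and $\Ric$ is the right identity. Semmelmann's formulas give $(\Ric\circ J-J\circ\Ric)X=-(n-2)\sigma(\nabla_X\alpha-\frac12\iota_X\omega)$, so the task is to prove $[\Ric,J]=0$. The paper does this in two cases.
\begin{itemize}
\item Full rank: it gets $[\Ric,J^2]=0$ from $J\circ\R+\R\circ J=\frac\sigma2J$ together with the $\Q$-identity. It then gets $[\Ric,J]=0$ on each eigenspace of $J^2$ from the pairwise relations $D_{ab}=0$.
\item Intermediate rank $4\le2r<n$: it shows that $u^{-2}g$ splits locally as $g_B+g_F$, with $u=|\omega^r|$. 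The trace equation $-\Delta_hu=\frac{r\sigma}2u$ comes from the equality case of the Bochner bound. PIC on frames with two horizontal and two vertical vectors gives $\nabla^2_hu(X,X)+\nabla^2_hu(Y,Y)\le-\frac\sigma2u$. Together these force $\nabla^2_hu=-\frac\sigma4uh$, which is exactly where $2r\ge4$ enters, and the Killing equation follows.
\end{itemize}
The equation then extends to all of $\widetilde M$ because the maximal-rank set is dense, by unique continuation for $\Delta_H\omega^r=\frac{r\sigma}2(n-2r+1)\omega^r$. Finally, a special Killing one-form of nonconstant length on a complete simply connected manifold forces the round sphere (Semmelmann's Proposition 3.1.3). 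Completeness is indispensable there, as the example above shows. Your final quotient argument is fine.
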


The spectral estimate combines the Bochner curvature bound
\cite{ChowZhu2024} with the twistor decomposition \cite{Semmelmann2003},
following Gallot--Meyer \cite{GallotMeyer1975}. In higher rank, the
equality equations force a special Killing one-form and hence roundness.

For a closed connected manifold with $\Ric\ge(n-1)g$, the
Lichnerowicz--Obata theorem gives $\lambda_1\ge n$ for the
first nonzero eigenvalue of the Laplacian, with equality precisely for the unit round sphere
\cite{Lichnerowicz1958,Obata1962}.
For scalar curvature, Friedrich's estimate on a closed spin manifold is $\mu^2\ge\frac{n}{4(n-1)}\min_M\operatorname{Scal}_g$
for every eigenvalue $\mu$ of the spin Dirac operator
\cite{Friedrich1980}. When the scalar curvature is positive, equality
is characterized by a real Killing spinor; the metric is then Einstein
but need not be round \cite{Baer1993}.
For the PIC estimate, our rigidity conclusion requires rank at least
four. In rank two, the equality equations admit conformal product
models with a spherical factor and reduce to a scalar equation on
a surface. These models motivate the geometric counterexamples.

Section~\ref{sec:spectral} proves the spectral estimate and the
existence of a closed equality form in Theorem~\ref{thm:spectral-rigidity}.
Section~\ref{sec:construction} develops the surface construction for
Theorem~\ref{thm:flexibility}.
Section~\ref{sec:rigidity} proves the higher-rank rigidity assertion.

\vspace{0.5cm}

\section{The sharp spectral estimate}\label{sec:spectral}
Let $n\ge4$ be an even integer.
\subsection{The Weitzenb\"ock curvature term under PIC}

Let $(V,\ip{\cdot}{\cdot})$ be an oriented Euclidean vector space of even
dimension $n=2m$.  Let $R$ be an algebraic curvature tensor satisfying the
$\sigma$-PIC condition; the results of this subsection are purely algebraic and will be applied pointwise, with $V=T_xM$ and $R=R_x$.  We write $\Q$ for the
Bochner curvature endomorphism on $\Lambda^2V^*$. Let $\{e_1,\cdots,e_n\}$ denote an orthonormal basis, and let $\{e^1,\cdots,e^n\}$ denote the dual basis. With the convention
$\Q = e^j\wedge\iota_{e_i}R(e_i,e_j)$, the Bochner formula is
\[
        \Delta_H\omega=\nabla^*\nabla\omega+\Q(\omega).
\]
At a point, we choose the orthonormal basis so that \(\omega\) is in skew-normal form:
\begin{equation}\label{eq:skew-normal-form}
        \omega=\sum_{a=1}^{m}\lambda_a e^{2a-1}\wedge e^{2a}.
\end{equation}
Set $p_a=2a-1$ and $q_a=2a$.  For $a\ne b$ define
\begin{align*}
        A_{ab}:={}&R_{p_ap_bp_ap_b}+R_{q_ap_bq_ap_b}
        +R_{p_aq_bp_aq_b}+R_{q_aq_bq_aq_b},\\
        B_{ab}:={}&R_{p_aq_ap_bq_b}.
\end{align*}
Applying the PIC inequality to the frames
$(e_{p_a},e_{q_a},e_{p_b},e_{q_b})$ and
$(e_{p_a},e_{q_a},e_{p_b},-e_{q_b})$ gives
\begin{equation}\label{eq:AminusB}
        A_{ab}-2\norm{B_{ab}}\ge \sigma.
\end{equation}

The nonnegativity $\ip{\Q(\omega)}{\omega}\ge 0$ under PIC is due to
Micallef--Wang~\cite{MW1993} (see also Seaman~\cite{Seaman1993});
the quantitative lower bound below was also obtained in
\cite{ChowZhu2024}.

\begin{lemma}\label{lem:curvature-lower-bound}
If $n=2m$ and $R$ is $\sigma$-PIC, then
\begin{equation}\label{eq:curvature-lower-bound}
        \ip{\Q(\omega)}{\omega}\ge \frac{n-2}{2}\sigma\norm{\omega}^2
\end{equation}
for every $\omega\in\Lambda^2V^*$.
Moreover, in an adapted frame \eqref{eq:skew-normal-form}, equality in
\eqref{eq:curvature-lower-bound} implies the following pairwise conditions:
\begin{enumerate}[label=\textup{(\roman*)},leftmargin=2.2em]
	\item If $\lambda_a\ne0$, then for every $b\ne a$,
		\begin{equation*}
       			 A_{ab}-2\norm{B_{ab}}=\sigma.
		\end{equation*}
	\item If $\norm{\lambda_a}\ne\norm{\lambda_b}$, then $B_{ab}=0$.
\end{enumerate}
\end{lemma}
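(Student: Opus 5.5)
We compute $\ip{\Q(\omega)}{\omega}$ directly in the adapted frame \eqref{eq:skew-normal-form}, reduce it to a sum over unordered pairs $\{a,b\}$, and then apply \eqref{eq:AminusB} to each pair.

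\textbf{Step 1: pair expansion.} By linearity, $\ip{\Q(\omega)}{\omega}$ is a quadratic form in the $\lambda_a$. Writing $\omega=\tfrac12\omega_{ij}e^i\wedge e^j$, the Weitzenb\"ock term has the standard form
\[
\ip{\Q(\omega)}{\omega}=\sum_{i,j,k}R_{ik}\,\omega_{ij}\omega_{kj}\;-\;\tfrac12\sum R_{ijkl}\,\omega_{ij}\omega_{kl},
\]
up to the paper's sign convention, which we fix so that the round sphere gives $(n-2)|\omega|^2$. Substituting the skew-normal form, the diagonal terms $a=b$ cancel: the Ricci contribution $R_{p_aq_ap_aq_a}$ in $\Ric(e_{p_a},e_{p_a})+\Ric(e_{q_a},e_{q_a})$ is cancelled by the $R_{p_aq_ap_aq_a}$ term of the second sum. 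What remains is
\[
\ip{\Q(\omega)}{\omega}=\sum_{a<b}\Big[(\lambda_a^2+\lambda_b^2)A_{ab}-4\lambda_a\lambda_bB_{ab}\Big]\cdot c
\]
for a normalization constant $c$ (expected $c=\tfrac12$, given $|\omega|^2=\sum\lambda_a^2$). To pin down the constant and the sign of the $B$ term, we check the expansion on the round sphere, where $A_{ab}=4$ and $B_{ab}=0$.

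\textbf{Step 2: pairwise estimate.} For each pair, $|4\lambda_a\lambda_bB_{ab}|\le 2(\lambda_a^2+\lambda_b^2)|B_{ab}|$ by AM--GM. Combined with \eqref{eq:AminusB}, this gives
\[
(\lambda_a^2+\lambda_b^2)A_{ab}-4\lambda_a\lambda_bB_{ab}\ge(\lambda_a^2+\lambda_b^2)\big(A_{ab}-2|B_{ab}|\big)\ge\sigma(\lambda_a^2+\lambda_b^2).
\]
Summing over $a<b$ counts each $\lambda_a^2$ exactly $m-1$ times, so
\[
\ip{\Q(\omega)}{\omega}\ge c\,(m-1)\sigma\sum_a\lambda_a^2=c\,\tfrac{n-2}{2}\sigma|\omega|^2 .
\]
With $c=1$ in the correct normalization, this is \eqref{eq:curvature-lower-bound}. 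Matching the constant exactly is the main bookkeeping obstacle, and the round-sphere test settles it.

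\textbf{Step 3: equality case.} If equality holds, every pair inequality in Step 2 is an equality, since each is a nonnegative slack. Equality in the second inequality, multiplied by $\lambda_a^2+\lambda_b^2$, gives (i) whenever $\lambda_a\ne0$. Equality in the first inequality forces $2(\lambda_a^2+\lambda_b^2)|B_{ab}|=4\lambda_a\lambda_bB_{ab}$. Since
\[
2(\lambda_a^2+\lambda_b^2)|B_{ab}|\ge 4|\lambda_a\lambda_b||B_{ab}|,
\]
this implies $(|\lambda_a|-|\lambda_b|)^2|B_{ab}|=0$. Hence $B_{ab}=0$ whenever $|\lambda_a|\ne|\lambda_b|$, which is (ii).
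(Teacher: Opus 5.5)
Your argument is the paper's own proof. The paper's three-sum identity \eqref{eq:curv-expansion} is exactly your bracket $(\lambda_a^2+\lambda_b^2)A_{ab}-4\lambda_a\lambda_bB_{ab}$ with $c=1$. The paper simply writes your AM--GM slack out explicitly as $4(\norm{B_{ab}\lambda_a\lambda_b}-B_{ab}\lambda_a\lambda_b)+2\norm{B_{ab}}(\norm{\lambda_a}-\norm{\lambda_b})^2$. Your Step 3 equality analysis also matches the paper's.

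The one thing that is actually wrong is the normalization discussion in Step 1.
\begin{itemize}
\item On the unit sphere, $\Q=2(n-2)$ on two-forms (the usual $p(n-p)$), not $(n-2)$. Your own component formula gives $2(n-2)\norm{\omega}^2$ there. Your stated normalization, together with the ``expected'' $c=\tfrac12$, is incompatible with the lemma itself. The unit sphere is $4$-PIC, so the lemma demands $\ip{\Q(\omega)}{\omega}\ge 2(n-2)\norm{\omega}^2$ there.
\item The round sphere cannot calibrate the $B$-term, because $B_{ab}=0$ there. Yet the relative factor $4$ between the $B$- and $A$-terms is precisely what makes the PIC combination $A_{ab}-2\norm{B_{ab}}$ from \eqref{eq:AminusB} appear. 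A larger factor would not be controlled by \eqref{eq:AminusB}.
\end{itemize}

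Both constants come directly from substituting the skew-normal form into your formula. The Ricci term gives
\[
\sum_a\lambda_a^2\Bigl(2R_{p_aq_ap_aq_a}+\sum_{b\ne a}A_{ab}\Bigr).
\]
The term $-\tfrac12\sum R_{ijkl}\omega_{ij}\omega_{kl}$ gives $-2\sum_{a,b}\lambda_a\lambda_bR_{p_aq_ap_bq_b}$. After the diagonal cancellation you describe, this is exactly $\sum_{a<b}\bigl[(\lambda_a^2+\lambda_b^2)A_{ab}-4\lambda_a\lambda_bB_{ab}\bigr]$, so $c=1$ with no test needed. With Step 1 corrected this way, your Steps 2 and 3 complete the proof.
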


\begin{proof}
For the curvature term on two-forms one has, in the frame
\eqref{eq:skew-normal-form},
\begin{align}\label{eq:curv-expansion}
\ip{\Q(\omega)}{\omega}
={}&\sum_{1\le a<b\le m}(\lambda_a^2+\lambda_b^2)(A_{ab}-2\norm{B_{ab}})\notag\\
&+4\sum_{1\le a<b\le m}(\norm{B_{ab}\lambda_a\lambda_b}-B_{ab}\lambda_a\lambda_b)\notag\\
&+2\sum_{1\le a<b\le m}\norm{B_{ab}}(\norm{\lambda_a}-\norm{\lambda_b})^2.
\end{align}
This is obtained by expanding the Weitzenb\"ock term in the skew-normal frame and
adding the elementary identity
\[
0=2\norm{B_{ab}}(\norm{\lambda_a}-\norm{\lambda_b})^2
  +4\norm{B_{ab}\lambda_a\lambda_b}
  -2\norm{B_{ab}}(\lambda_a^2+\lambda_b^2)
\]
for each pair $a<b$.  The last two sums in \eqref{eq:curv-expansion} are
nonnegative, and \eqref{eq:AminusB} gives
\[
        \ip{\Q(\omega)}{\omega}
        \ge \sigma\sum_{a<b}(\lambda_a^2+\lambda_b^2)
        =\sigma(m-1)\sum_a\lambda_a^2
        =\frac{n-2}{2}\sigma\norm{\omega}^2.
\]
Subtracting \(\sigma\sum_{a<b}(\lambda_a^2+\lambda_b^2)\) from \eqref{eq:curv-expansion}, the equality conditions follow from the vanishing of each nonnegative summand.
\end{proof}

\subsection{The estimate and the equality equations}

Let $(M^n,g)$ be a closed Riemannian manifold.

For a two-form $\omega$, define
\begin{equation}\label{eq:P-definition}
        P_X\omega
        :=
        \nabla_X\omega-\frac13\iota_X\dd\omega
        +\frac{1}{n-1}X^\flat\wedge\dstar\omega .
\end{equation}
We regard $P\omega$ as a section of $T^*M\otimes\Lambda^2T^*M$, with
\[
        \norm{P\omega}^2:=\sum_i\norm{P_{e_i}\omega}^2
\]
for any local orthonormal frame $e_1,\ldots,e_n$.

\begin{lemma}\label{lem:twistor-identity}
For every $\omega\in\Omega^2(M)$,
\begin{equation}\label{eq:P-decomposition}
        \norm{\nabla\omega}^2
        =
        \norm{P\omega}^2
        +\frac13\norm{\dd\omega}^2
        +\frac{1}{n-1}\norm{\dstar\omega}^2 .
\end{equation}
\end{lemma}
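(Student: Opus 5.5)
This is a pointwise orthogonal decomposition of $T^*M\otimes\Lambda^2T^*M$, so we work at a single point with an orthonormal frame $e_1,\dots,e_n$ and regard $\nabla\omega$ as the element $T=\sum_i e^i\otimes\nabla_{e_i}\omega$. We introduce two linear maps $\alpha,\beta$ out of $T^*\otimes\Lambda^2$. The first is the antisymmetrization $\alpha(T)=\sum_i e^i\wedge T_{e_i}\in\Lambda^3$, so that $\alpha(\nabla\omega)=\dd\omega$. The second is the contraction $\beta(T)=-\sum_i\iota_{e_i}T_{e_i}\in\Lambda^1$, so that $\beta(\nabla\omega)=\dstar\omega$. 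Their adjoints, with respect to the natural inner products (we use the convention $\norm{e^1\wedge e^2}=1$ on forms), are computed directly:
\[
\alpha^*(\eta)=\sum_i e^i\otimes\iota_{e_i}\eta,\qquad \beta^*(\theta)=-\sum_i e^i\otimes e^i\wedge\theta .
\]

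The key computation is the pair of identities
\[
\alpha\alpha^*=3\,\id_{\Lambda^3},\qquad \beta\beta^*=(n-1)\,\id_{\Lambda^1},\qquad \alpha\beta^*=0 .
\]
These follow from $\sum_i e^i\wedge\iota_{e_i}\eta=k\eta$ on $k$-forms, from $\sum_i\iota_{e_i}(e^i\wedge\theta)=(n-1)\theta$ on one-forms, and from $\sum_i e^i\wedge e^i\wedge\theta=0$. Consequently $\Pi_1=\frac13\alpha^*\alpha$ and $\Pi_2=\frac1{n-1}\beta^*\beta$ are mutually orthogonal orthogonal projections, and $\Pi_1\Pi_2=0$ because $\alpha\beta^*=0$. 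Writing out
\[
\Pi_1(\nabla\omega)=\tfrac13\sum_i e^i\otimes\iota_{e_i}\dd\omega,\qquad \Pi_2(\nabla\omega)=-\tfrac1{n-1}\sum_i e^i\otimes e^i\wedge\dstar\omega ,
\]
we see that $P\omega=(1-\Pi_1-\Pi_2)\nabla\omega$ exactly matches \eqref{eq:P-definition}. Pythagoras then gives
\[
\norm{\nabla\omega}^2=\norm{P\omega}^2+\norm{\Pi_1\nabla\omega}^2+\norm{\Pi_2\nabla\omega}^2 .
\]
The norms of the two projected pieces are $\norm{\Pi_1 T}^2=\frac19\ip{\alpha^*\alpha T}{\alpha^*\alpha T}=\frac19\ip{\alpha T}{\alpha\alpha^*\alpha T}=\frac13\norm{\alpha T}^2$ and, in the same way, $\norm{\Pi_2T}^2=\frac1{n-1}\norm{\beta T}^2$. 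Substituting $\alpha(\nabla\omega)=\dd\omega$ and $\beta(\nabla\omega)=\dstar\omega$ yields \eqref{eq:P-decomposition}.

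The main obstacle is bookkeeping of sign and normalization conventions. These include the sign of $\dstar$ (we take $\dstar\omega=-\sum_i\iota_{e_i}\nabla_{e_i}\omega$), the identification $\dd=\sum_i e^i\wedge\nabla_{e_i}$ (valid for the Levi-Civita connection), and the form-norm convention. This last one is what makes the constants $3$ and $n-1$ come out, rather than factors of $2$ or $6$. I would first check $\alpha\alpha^*=3$ explicitly on $e^1\wedge e^2\wedge e^3$ to confirm the normalization, and the sign in \eqref{eq:P-definition} for the $\dstar$ term, before assembling the argument.
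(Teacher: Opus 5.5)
Your proposal is correct and follows the same route as the paper. Both split $\nabla\omega$ orthogonally into $P\omega$, $\frac13\iota_X\dd\omega$ and $-\frac{1}{n-1}X^\flat\wedge\dstar\omega$ and read off the squared norms; the paper cites Semmelmann for the orthogonality, whereas your identities $\alpha\alpha^*=3\,\id$, $\beta\beta^*=(n-1)\,\id$, $\alpha\beta^*=0$ verify it directly, with conventions ($\dstar=-\sum_i\iota_{e_i}\nabla_{e_i}$, $\norm{e^1\wedge e^2}=1$) that match the paper's.
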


\begin{proof}
The three terms in the twistor decomposition
\[
\nabla_X\omega=P_X\omega+\frac13\iota_Xd\omega
-\frac{1}{n-1}X^\flat\wedge d^*\omega
\]
are orthogonal in $T^*M\otimes\Lambda^2T^*M$; see
\cite{Semmelmann2003}. Their squared norms are respectively
$|P\omega|^2$, $|d\omega|^2/3$, and $|d^*\omega|^2/(n-1)$.
\end{proof}

Combining \eqref{eq:P-decomposition} with the Bochner formula gives the identity
\begin{equation}\label{eq:key-integral-identity}
\frac23\int_M\norm{\dd\omega}^2
+
\frac{n-2}{n-1}\int_M\norm{\dstar\omega}^2
=
\int_M\norm{P\omega}^2
+
\int_M\ip{\Q(\omega)}{\omega}
\end{equation}
for every two-form $\omega$.

\begin{proposition}\label{prop:eigenvalue-estimate}
Let $(M^n,g)$ be a closed connected $\sigma$-PIC manifold of even
dimension $n\ge4$. Then the estimate \eqref{eq:main-eigenvalue-estimate}
holds; that is,
\begin{equation*}
        \lambda_1^{(2)}(M,g)\ge \frac{n-1}{2}\sigma.
\end{equation*}
\end{proposition}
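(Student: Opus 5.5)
The plan is to use the Hodge decomposition and treat exact and coexact eigenforms separately, in the manner of Gallot--Meyer. Since $M$ is closed and $\sigma$-PIC, Lemma~\ref{lem:curvature-lower-bound} gives $\ip{\Q(\omega)}{\omega}>0$ pointwise for $\omega\neq0$. By the Bochner formula there are then no harmonic two-forms, so every two-form eigenvalue is positive. Moreover $\Delta_H$ commutes with $d$ and $d^*$, so each eigenspace splits into closed (hence exact) and coclosed parts. It therefore suffices to take an eigenform $\omega$ with $\Delta_H\omega=\lambda\omega$, $\lambda>0$, which is either closed ($d\omega=0$) or coclosed ($d^*\omega=0$), and to show $\lambda\ge\frac{n-1}{2}\sigma$ in each case.

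In the closed case, $\Delta_H\omega=dd^*\omega$, so $\int_M|d^*\omega|^2=\lambda\int_M|\omega|^2$. I would insert this into \eqref{eq:key-integral-identity} with $d\omega=0$, drop $\int|P\omega|^2\ge0$, and use \eqref{eq:curvature-lower-bound}. This gives
\[
\frac{n-2}{n-1}\lambda\int_M|\omega|^2\ \ge\ \frac{n-2}{2}\sigma\int_M|\omega|^2 .
\]
Hence $\lambda\ge\frac{n-1}{2}\sigma$, using $n>2$. This is exactly the sharp constant, and equality forces $P\omega=0$ together with equality in Lemma~\ref{lem:curvature-lower-bound}. Since $P\omega=0$ and $d\omega=0$, the definition of $P$ yields $\nabla_X\omega=-\frac{1}{n-1}X^\flat\wedge d^*\omega$, which is consistent with the later equality statement.

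In the coclosed case, $\int_M|d\omega|^2=\lambda\int_M|\omega|^2$, and the same argument gives
\[
\frac23\lambda\ \ge\ \frac{n-2}{2}\sigma ,\qquad\text{i.e.}\qquad \lambda\ge\frac{3(n-2)}{4}\sigma .
\]
This is enough only when $3(n-2)/4\ge(n-1)/2$, i.e.\ $n\ge4$, which always holds. (For $n=4$ the two bounds coincide at $3\sigma/2$.) So the coclosed case needs no extra input.

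I expect the main obstacle to be the justification of the reduction itself rather than the inequalities. One has to confirm that the closed and coclosed components of an eigenform are again eigenforms with the same eigenvalue: this follows from commuting $\Delta_H$ with $d$ and $d^*$ and from the vanishing of harmonic two-forms. With that in place, the comparison of constants, $\frac{3(n-2)}{4}$ against $\frac{n-1}{2}$, is the only dimensional check, and it is valid for all even $n\ge4$.
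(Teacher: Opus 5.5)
Your proof is correct. It uses the same ingredients as the paper: the integral identity \eqref{eq:key-integral-identity}, discarding $\int_M|P\omega|^2$, and the pointwise bound of Lemma~\ref{lem:curvature-lower-bound}. The difference is that you organize them through a Gallot--Meyer splitting into closed and coclosed eigenforms.

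The paper never splits an eigenform. For an arbitrary nonzero two-form it writes $\frac{n-2}{n-1}\int_M(|d\omega|^2+|d^*\omega|^2)$ as the right-hand side of \eqref{eq:key-integral-identity} plus $\bigl(\frac{n-2}{n-1}-\frac23\bigr)\int_M|d\omega|^2\ge0$. This bounds the Rayleigh quotient directly. It needs no Hodge decomposition, no commuting of $\Delta_H$ with $d$ and $d^*$, and no a priori vanishing of harmonic two-forms. Your dimensional check $\frac{3(n-2)}{4}\ge\frac{n-1}{2}$ is the same inequality as the paper's $\frac23\le\frac{n-2}{n-1}$.

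What your splitting buys is finer information. You get a separate bound $\frac{3(n-2)}{4}\sigma$ on the coclosed spectrum, which is strictly larger than $\frac{n-1}{2}\sigma$ for $n\ge6$. So in that range any equality eigenform is automatically closed. For $n=4$ the two bounds coincide, and a coclosed equality form is not excluded. This is exactly the case distinction the paper makes afterwards in Proposition~\ref{prop:equality-basic}. There it extracts $d\beta=0$ from the positive coefficient $\frac{n-2}{n-1}-\frac23$ when $n\ge6$, and uses a Hodge-star argument when $n=4$.
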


\begin{proof}
Let $0\ne\omega\in\Omega^2(M)$. Since $\frac23\le \frac{n-2}{n-1}$ for $n\ge4$, by \eqref{eq:key-integral-identity} and Lemma
\ref{lem:curvature-lower-bound},
\[
\begin{aligned}
        &\frac{n-2}{n-1}\int_M\bigl(\norm{\dd\omega}^2+\norm{\dstar\omega}^2\bigr)\\
        &=
        \int_M\norm{P\omega}^2
        +
        \int_M\ip{\Q(\omega)}{\omega}
        +
        \left(\frac{n-2}{n-1} - \frac{2}{3}\right)\int_M \norm{\dd\omega}^2  \\
        &\ge
        \frac{n-2}{2}\sigma\int_M\norm{\omega}^2 .
\end{aligned}
\]
Thus we obtain
\[
        \frac{\int_M\bigl(\norm{\dd\omega}^2+\norm{\dstar\omega}^2\bigr)}
             {\int_M\norm{\omega}^2}
        \ge
        \frac{n-1}{2}\sigma .
\]
Taking the infimum over all nonzero two-forms proves Proposition~\ref{prop:eigenvalue-estimate}.
\end{proof}

\begin{proposition}
\label{prop:equality-basic}
Assume equality holds in Proposition~\ref{prop:eigenvalue-estimate}. Then there exists a nonzero pair
$(\alpha,\omega)\in\Omega^1_{\dstar}(\widetilde M)\oplus
\Omega^2_{\dd}(\widetilde M)$ on the universal cover $\pi:(\widetilde M,\widetilde g)\to(M,g)$ satisfying
\begin{equation}\label{eq:equality-basic}
        \omega=\dd\alpha,
        \quad
        \dstar\omega=\frac{n-1}{2}\sigma\,\alpha,
        \quad
        \nabla_X\omega=-\frac{\sigma}{2}X^\flat\wedge\alpha,
        \quad
        \Q(\omega)=\frac{n-2}{2}\sigma\,\omega .
\end{equation}
Here $\Omega^1_{\dstar}(\widetilde M)=\{\alpha\in\Omega^1(\widetilde M)\mid \dstar\alpha=0\}$ and $\Omega^2_{\dd}(\widetilde M)=\{\omega\in\Omega^2(\widetilde M)\mid \dd\omega=0\}$.
All operators are computed with respect to the lifted metric $\widetilde g$.
\end{proposition}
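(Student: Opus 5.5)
Take an eigenform $\eta$ on $M$ with $\Delta_H\eta=\lambda\eta$, where $\lambda=\frac{n-1}{2}\sigma$ is the bottom of the spectrum, and split it using the Hodge decomposition.

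\textbf{Step 1: reduce to an exact eigenform.} Since $\Delta_H$ commutes with $\dd$ and $\dstar$, and there are no harmonic two-forms, the decomposition $\eta=\dd\beta+\dstar\gamma$ gives an exact part and a coexact part, each an eigenform with eigenvalue $\lambda$ whenever it is nonzero. First rule out a nonzero coexact eigenform $\eta$, i.e.\ one with $\dstar\eta=0$. In the chain of inequalities in the proof of Proposition~\ref{prop:eigenvalue-estimate}, the coefficient gap $\frac{n-2}{n-1}-\frac23$ multiplies $\int|\dd\eta|^2$, and it is strictly positive for $n\ge 6$; equality then forces $\dd\eta=0$, so $\eta$ is harmonic, which is impossible. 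For $n=4$ the gap vanishes, so I would instead compare the two sides of \eqref{eq:key-integral-identity} directly. For coexact $\eta$ one gets $\frac23\lambda\int|\eta|^2=\int|P\eta|^2+\int\ip{\Q\eta}{\eta}\ge\sigma\int|\eta|^2$. But $\frac23\cdot\frac32\sigma=\sigma$, so equality is also possible here, and I am not sure this case can be excluded. The fallback is to use Hodge duality in dimension $4$: $*$ commutes with $\Delta_H$ and maps coexact forms to closed ones. So if $\eta$ is coexact, then $*\eta$ is closed, hence exact, and is again an eigenform. In every case we therefore obtain a nonzero exact eigenform, which we call $\omega$. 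Working on $M$ is fine throughout; the passage to $\widetilde M$ in the statement is only needed for the later sections.

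\textbf{Step 2: extract the equations.} For closed $\omega$ with $\Delta_H\omega=\lambda\omega$, all the inequalities in the proof of Proposition~\ref{prop:eigenvalue-estimate} become equalities. Equality in the integral forces $P\omega\equiv0$. Equality in Lemma~\ref{lem:curvature-lower-bound} holds pointwise, because the integrand $\ip{\Q\omega}{\omega}-\frac{n-2}{2}\sigma|\omega|^2$ is nonnegative and integrates to zero. To upgrade this to the identity $\Q\omega=\frac{n-2}{2}\sigma\omega$, note that $\Q-\frac{n-2}{2}\sigma$ is a nonnegative symmetric endomorphism of $\Lambda^2$. A vector on which the quadratic form of a nonnegative symmetric endomorphism vanishes lies in its kernel. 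This gives the fourth equation.

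Now set $\alpha:=\frac{2}{(n-1)\sigma}\dstar\omega$. Then $\dstar\alpha=0$, and $\dstar\omega=\frac{n-1}{2}\sigma\alpha$ by construction. Since $\dd\omega=0$, we have $\Delta_H\omega=\dd\dstar\omega=\lambda\omega$, which gives $\omega=\dd\alpha$. Finally, $P\omega=0$ together with $\dd\omega=0$ turns the definition \eqref{eq:P-definition} into $\nabla_X\omega=-\frac1{n-1}X^\flat\wedge\dstar\omega=-\frac\sigma2X^\flat\wedge\alpha$. Pulling all of these back to $\widetilde M$ preserves every identity.

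\textbf{Main obstacle.} I expect the hard step to be the coexact case in dimension $4$, where the coefficient gap vanishes; duality via $*$ is the tool I would try first there. The pointwise upgrade in Step 2, from equality of the quadratic form to equality of the endomorphism, is routine by comparison.
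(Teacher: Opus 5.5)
Your route is essentially the paper's. For $n\ge6$ every bottom eigenform is closed. For $n=4$ you take either the exact part or the Hodge dual of a coexact eigenform, extract $P\omega=0$ from the equality case, and set $\alpha=\frac{2}{(n-1)\sigma}\dstar\omega$. Your derivation of $\Q(\omega)=\frac{n-2}{2}\sigma\,\omega$ is correct and differs from the paper's. Lemma~\ref{lem:curvature-lower-bound} makes the symmetric endomorphism $\Q-\frac{n-2}{2}\sigma$ positive semidefinite, so a form where its quadratic form vanishes lies in its kernel. The paper instead computes $\nabla^*\nabla\omega=\frac{1}{n-1}\dd\dstar\omega=\frac{\sigma}{2}\omega$ from the equality equation and subtracts this from $\Delta_H\omega$ in the Bochner formula.

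There is a genuine gap in the $n=4$ coexact case, and it sits in your remark that ``working on $M$ is fine throughout.'' The Hodge star needs an orientation, and $M$ need not be orientable. Indeed $\mathbb{RP}^n$ with $n$ even is non-orientable and is one of the equality cases in Theorem~\ref{thm:spectral-rigidity}. On such an $M$, $*\eta$ is only a form twisted by the orientation line bundle, not an element of $\Omega^2(M)$. So Step~1 does not produce a closed eigenform on $M$. This is exactly why the proposition is stated on $\widetilde M$.

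The repair needs some care. You cannot simply move to $\widetilde M$ and rerun your Step~2, because that step integrates, and $\widetilde M$ may be noncompact, as for $S^3\times S^1$. The paper avoids this as follows. Equality in the $n=4$ chain already gives $P\eta=0$ pointwise on $M$, and $\dstar\eta=0$, so $\nabla_X\tilde\eta=\frac13\iota_X\dd\tilde\eta$ for $\tilde\eta=\pi^*\eta$. For $\omega=\star\tilde\eta$ on the oriented cover it then checks directly that
\[
\nabla_X\omega=\star\nabla_X\tilde\eta=\tfrac13\star\iota_X\dd\tilde\eta=\tfrac13X^\flat\wedge\star\dd\tilde\eta=-\tfrac13X^\flat\wedge\dstar\omega ,
\]
which is $P\omega=0$. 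Alternatively, pass to the orientation double cover, which is compact, connected and $\sigma$-PIC. There your Step~2 applies verbatim to $*$ of the lifted eigenform, and you then pull back to $\widetilde M$.
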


\begin{proof}
Since equality in Proposition~\ref{prop:eigenvalue-estimate} is attained,
there exists a nonzero two-form $\beta\in\Omega^2(M)$ such that $\Delta_H\beta=\frac{n-1}{2}\sigma\,\beta$.

We first construct, on the universal cover, a nonzero closed two-form $\omega$
satisfying
\begin{equation}\label{eq:closed-equality-form-upstairs}
        \Delta_H\omega=\frac{n-1}{2}\sigma\,\omega,
        \qquad
        P\omega=0 .
\end{equation}

Assume first that $n\ge6$. Equality in the proof of
Proposition~\ref{prop:eigenvalue-estimate} gives
\[
\begin{aligned}
0={}&
        \int_M\norm{P\beta}^2
        +
        \int_M
        \left(
        \ip{\Q(\beta)}{\beta}
        -\frac{n-2}{2}\sigma\norm{\beta}^2
        \right)+
        \left(\frac{n-2}{n-1}-\frac23\right)
        \int_M\norm{\dd\beta}^2 .
\end{aligned}
\]
Each term is nonnegative. Since $\frac{n-2}{n-1}-\frac23>0$ for $n\ge6$,
we get $\dd\beta=0$ and $P\beta=0$. In this case we set $\omega:=\pi^*\beta$.
Then \eqref{eq:closed-equality-form-upstairs} holds.

It remains to consider $n=4$. Then $\Delta_H\beta=\frac32\sigma\,\beta$.
Equality in the proof of Proposition~\ref{prop:eigenvalue-estimate} gives
\[
        \int_M\norm{P\beta}^2
        +
        \int_M\left(
        \ip{\Q(\beta)}{\beta}
        -\sigma\norm{\beta}^2
        \right)
        =0.
\]
Hence $P\beta=0$ pointwise.

If $\dd\dstar\beta\ne0$, we set $\gamma:=\frac{2}{3\sigma}\dd\dstar\beta$.
Then $\gamma$ is a nonzero closed two-form on $M$ and $\Delta_H\gamma=\frac32\sigma\,\gamma$,
because $\Delta_H$ commutes with $\dd$ and $\dstar$. Applying the identity
\eqref{eq:key-integral-identity} to this closed eigenform gives
\[
        \frac23\int_M\norm{\dstar\gamma}^2
        =
        \int_M\norm{P\gamma}^2+\int_M\ip{\Q(\gamma)}{\gamma}.
\]
Since
\[
        \int_M\norm{\dstar\gamma}^2
        =
        \frac32\sigma\int_M\norm{\gamma}^2
\]
and Lemma~\ref{lem:curvature-lower-bound} gives
$\ip{\Q(\gamma)}{\gamma}\ge\sigma\norm{\gamma}^2$, we get $P\gamma=0$.
In this case we set $\omega:=\pi^*\gamma$.
Then \eqref{eq:closed-equality-form-upstairs} holds.

If instead $\dd\dstar\beta=0$, then
$\dstar\dd\beta=\frac32\sigma\,\beta$ is nonzero. Applying $\dstar$ gives
$\dstar\beta=0$. Since $\widetilde M$ is simply connected, it is orientable;
fix an orientation on $\widetilde M$. We set $\omega:=\star\pi^*\beta$.
Then $\omega$ is nonzero. Since $\dstar\pi^*\beta=0$, the form $\omega$ is
closed. Since the Hodge star commutes with $\Delta_H$ on two-forms in
dimension four, we have $\Delta_H\omega=\frac32\sigma\,\omega$.
It remains only to check $P\omega=0$. Let $\widetilde\beta=\pi^*\beta$.
Since $P\widetilde\beta=0$ and $\dstar\widetilde\beta=0$, we have
\[
        \nabla_X\widetilde\beta=\frac13\iota_X\dd\widetilde\beta .
\]
Using $\omega=\star\widetilde\beta$, $\star^2=1$ on two-forms in dimension
four, and $\dstar\omega=-\star\dd\widetilde\beta$, we obtain
\[
        \nabla_X\omega
        =
        \star\nabla_X\widetilde\beta
        =
        \frac13\star\iota_X\dd\widetilde\beta
        =
        \frac13 X^\flat\wedge\star\dd\widetilde\beta
        =
        -\frac13 X^\flat\wedge\dstar\omega .
\]
Since $\dd\omega=0$ and $n-1=3$, this is exactly $P\omega=0$.
Thus \eqref{eq:closed-equality-form-upstairs} holds also in this case.

We now finish the proof on $\widetilde M$. Since $\dd\omega=0$ and $\Delta_H\omega=\frac{n-1}{2}\sigma\,\omega$, we have $\dd\dstar\omega=\frac{n-1}{2}\sigma\,\omega$. Define
\[
        \alpha:=\frac{2}{(n-1)\sigma}\dstar\omega .
\]
Then
\[
        \dstar\omega=\frac{n-1}{2}\sigma\,\alpha,
        \qquad
        \dd\alpha
        =
        \frac{2}{(n-1)\sigma}\dd\dstar\omega
        =
        \omega,
        \qquad
        \dstar\alpha=0.
\]
Since $P\omega=0$ and $\dd\omega=0$, we get
\[
        \nabla_X\omega
        =
        -\frac{1}{n-1}X^\flat\wedge\dstar\omega
        =
        -\frac{\sigma}{2}X^\flat\wedge\alpha .
\]

Finally, we prove the pointwise identity for $\Q(\omega)$. From $\nabla_X\omega=-\frac{1}{n-1}X^\flat\wedge\dstar\omega$ we obtain
\[
        \nabla^*\nabla\omega
        =
        \frac{1}{n-1}\dd\dstar\omega
        =
        \frac{\sigma}{2}\omega .
\]
Using the Bochner formula and the eigen-equation for $\omega$, we conclude
\[
        \Q(\omega)
        =
        \Delta_H\omega-\nabla^*\nabla\omega
        =
        \frac{n-1}{2}\sigma\,\omega-\frac{\sigma}{2}\omega
        =
        \frac{n-2}{2}\sigma\,\omega .
\]
Thus $(\alpha,\omega)$ satisfies \eqref{eq:equality-basic}.
\end{proof}

Propositions~\ref{prop:eigenvalue-estimate} and~\ref{prop:equality-basic}
prove the estimate and the existence of a closed equality form in
Theorem~\ref{thm:spectral-rigidity}.

The equality equations in rank two suggest a surface construction,
which we use next to study the width and stable-disk radius bounds.

\vspace{0.5cm}

\section{Geometric counterexamples}\label{sec:construction}

Let $n\ge4$ be an integer. We use a conformal change on a product of a surface with $S^{n-2}$ to construct
the example in Theorem~\ref{thm:flexibility}. The construction is motivated by the equality case of the eigenvalue estimate. In the equality case of the Ricci eigenvalue estimate, let $f$ be a
first eigenfunction. Away from its critical points, the conformally scaled metric
$|df|^{-2}g$ splits off a line.
For rank-two equality under PIC, it is therefore natural to consider
$|\omega|^{-2}g$ away from the zero set of $\omega$ and expect a splitting
into a surface $B^2$ and $S^{n-2}$. This motivates the construction below.
\subsection{The surface criterion}
We first record a general construction of metrics with positive isotropic curvature, and its relation to the
rank-two equality equations.
\begin{lemma}\label{lem:conformal-product-pic}
Let $n\ge4$, let $h$ be a metric on a surface $B$ with $K_h\geq -1$, and let $u>0$ satisfy $\nabla_h^2(u^{-1})\le u^{-1}h$ and $-\Delta_hu\ge2u^3$. Define a Riemannian metric $g$ on $B\times S^{n-2}$ by $g=u^2(h+g_{S^{n-2}})$, where $g_{S^{n-2}}$ is the round metric of sectional curvature one. For every $g$-orthonormal four-frame
$\{e_1,e_2,e_3,e_4\}$, we have
\[
R_{1313}+R_{1414}+R_{2323}+R_{2424}-2R_{1234}\ge4.
\]
\end{lemma}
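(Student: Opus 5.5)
The plan is to compute the curvature of $g$ with the conformal change formula and then minimize the isotropic curvature expression over four-frames. It is convenient to set $\psi=u^{-1}$, so that $g=\psi^{-2}\hat g$ with $\hat g=h+g_{S^{n-2}}$. The hypotheses then read
\[
\nabla_h^2\psi\le\psi h,
\qquad
\psi\Delta_h\psi-2|d\psi|_h^2=-u^{-3}\Delta_h u\ge 2 .
\]
The second identity follows from $\Delta_h(u^{-1})=-u^{-2}\Delta_h u+2u^{-3}|du|^2$. Since $\psi$ depends only on the $B$-factor, its $\hat g$-Hessian equals $\nabla_h^2\psi$ extended by zero in the sphere directions. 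The standard formula for $g=\psi^{-2}\hat g$ gives, up to the paper's sign conventions,
\[
\mathrm{Rm}_g=\psi^{-2}\Bigl(\mathrm{Rm}_{\hat g}+\psi^{-1}\nabla^2\psi\owedge\hat g-\tfrac12\psi^{-2}|d\psi|^2\,\hat g\owedge\hat g\Bigr).
\]

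The first step is a purely algebraic observation about Kulkarni--Nomizu products. For any symmetric $A$ and any $\hat g$-orthonormal frame,
\[
(A\owedge\hat g)_{1234}=0,
\]
and the isotropic combination of $A\owedge\hat g$ equals $2(A_{11}+A_{22}+A_{33}+A_{44})$. A $g$-orthonormal frame is $\psi$ times a $\hat g$-orthonormal frame. Hence the isotropic expression $I_g$ satisfies
\[
I_g=\psi^{2}I_{\hat g}+2\psi\operatorname{tr}_\pi\nabla^2\psi-4|d\psi|^2 ,
\]
or the analogous expression with the correct signs, where $\pi$ is the $4$-plane spanned by the frame and $I_{\hat g}$ is the isotropic expression of the product metric.

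Next I decompose the frame relative to $TB\oplus TS^{n-2}$. Let $P$ denote orthogonal projection onto $TB$ restricted to $\pi$, and set $t=\operatorname{tr}(P|_\pi)\in[0,2]$. The product curvature is $K_h$ on $\Lambda^2TB$ and is the constant-curvature-one operator on the sphere factor. The mixed terms vanish. From this I compute $I_{\hat g}$ in terms of $K_h$, $t$ and the determinant-type quantity $\det(P|_\pi)$, which measures how much of $\Lambda^2TB$ lies in $\Lambda^2\pi$. A key point is the sign of the $R_{1234}$ contribution coming from the $B$-part. For the Hessian term I use $\nabla^2\psi\le\psi h$ in one direction. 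I also need the trace identity $\operatorname{tr}_\pi\nabla^2\psi\ge\Delta_h\psi-(2-t)\cdot(\text{top eigenvalue})$, or the corresponding bound in whichever direction the sign requires. The eigenvalues of $\nabla^2_h\psi$ are at most $\psi$, and their sum is $\Delta_h\psi\ge(2+2|d\psi|^2)/\psi$. Together these give a lower bound on each eigenvalue: the smaller one is at least $\Delta_h\psi-\psi$.

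Finally, $I_g$ is affine in the frame quantities $t$ and $\det(P|_\pi)$, together with the eigen-directions of $\nabla^2\psi$. So it suffices to check the extreme configurations: $\pi$ containing $TB$, $\pi$ meeting $TB$ in a line, and $\pi\subset TS^{n-2}$. In each case I use $K_h\ge-1$, the Hessian upper bound, and $\psi\Delta\psi-2|d\psi|^2\ge2$ to reach $I_g\ge4$. For instance, when $\pi\subset TS^{n-2}$ only the $\psi^2\cdot4$ term from the sphere and the $-|d\psi|^2$ terms survive. The bound then has to come from a combination in which the Laplacian inequality controls $|d\psi|^2$. I expect the main obstacle to be this optimization over mixed frames. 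The difficulty is to arrange the bounds so that the negative contributions, from $K_h\ge-1$ through the $B$-block and from the Hessian through $\nabla^2\psi\le\psi h$, are exactly offset by the sphere curvature. My first attempt would be to write everything in terms of the two principal angles between $\pi$ and $TB$ and verify the inequality as a function of these angles.
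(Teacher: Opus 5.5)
Your setup is the paper's. With $\psi=u^{-1}$ you get $I_g=\psi^2I_{\hat g}+2\psi\operatorname{tr}_\pi\nabla^2\psi-4|d\psi|^2$, which is correct. Your bound $\operatorname{tr}_\pi\nabla^2\psi\ge\Delta_h\psi-(2-t)\mu_{\max}$ is also correct: with $s_{\min},s_{\max}=\sum_a\langle\bar e_a,v\rangle^2\in[0,1]$ for the two eigenvectors, the difference is $(1-s_{\min})(\mu_{\max}-\mu_{\min})\ge0$. But the proposal stops short of a proof, and the reduction you propose for the last step does not hold up, for three reasons.
\begin{itemize}
\item The compression of the projection onto $TB$ to $\pi$ has rank at most two on a four-dimensional space, so $\det(P|_\pi)\equiv0$ and is not a usable parameter.
\item $I_{\hat g}$ is not a function of $K_h$, $t$ and $\pi$. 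It depends on the frame, i.e.\ on the isotropic plane spanned by $e_1+ie_2$ and $e_3+ie_4$. For instance, with $\pi\supset TB$, taking $e_1,e_3\in TB$ or $e_1,e_2\in TB$ gives different values when $K_h>-1$.
\item The Hessian term depends on $s_{\min}$ and $s_{\max}$ separately, not only on $t$.
\end{itemize}
So ``affine in $t$ and $\det(P|_\pi)$, check three extreme configurations'' is unjustified. The sign of the $B$-contribution to $R_{1234}$, which you flag as key, is also left open.

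Both points close directly. Write the sphere curvature as $g^V_{ac}g^V_{bd}-g^V_{ad}g^V_{bc}$ with $g^V=\delta-h$. Let $\phi$ be the area form of $h$, and set $h_{ab}=\langle\bar e_a^H,\bar e_b^H\rangle$ and $\phi_{ab}=\phi(\bar e_a^H,\bar e_b^H)$. Since $h_{ac}h_{bd}-h_{ad}h_{bc}=\phi_{ab}\phi_{cd}$, this gives
\[
\hat R_{abcd}=\delta_{ac}\delta_{bd}-\delta_{ad}\delta_{bc}-\bigl(h_{ac}\delta_{bd}+h_{bd}\delta_{ac}-h_{ad}\delta_{bc}-h_{bc}\delta_{ad}\bigr)+(K_h+1)\phi_{ab}\phi_{cd}.
\]
By your Kulkarni--Nomizu observation, the first two pieces contribute $4-2t$ to the isotropic expression and nothing to the $1234$ component. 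The matrix $(\phi_{ab})$ has rank at most two, so its Pfaffian $\phi_{12}\phi_{34}-\phi_{13}\phi_{24}+\phi_{14}\phi_{23}$ vanishes. Hence the last piece contributes $(K_h+1)\bigl[(\phi_{13}-\phi_{24})^2+(\phi_{14}+\phi_{23})^2\bigr]\ge0$, and so $I_{\hat g}\ge4-2t$.

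Now use $\mu_{\max}\le\psi$ and $t\le2$ in your trace bound to get $\operatorname{tr}_\pi\nabla^2\psi\ge\Delta_h\psi-(2-t)\psi$. Therefore
\[
I_g\ge\psi^2(4-2t)+2\psi\bigl(\Delta_h\psi-(2-t)\psi\bigr)-4|d\psi|^2=2\bigl(\psi\Delta_h\psi-2|d\psi|^2\bigr)\ge4 .
\]
The $t$-dependence cancels identically, so no optimization over frames or principal angles is needed. This is exactly the paper's argument. The paper phrases the trace step as $\sum_a(\psi h-\nabla_h^2\psi)(\bar e_a^H,\bar e_a^H)\le\operatorname{tr}_h(\psi h-\nabla_h^2\psi)$, using $\psi h-\nabla_h^2\psi\ge0$ and Bessel's inequality.
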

\begin{proof}
Let $\bar g=h+g_{S^{n-2}}$ be the product metric on $B\times S^{n-2}$ and we use $\bar R$ to denote the curvature tensor of the metric $\bar g$. Let $p\in B\times S^{n-2}$, and given a $g$-orthonormal frame $\{e_1,e_2,e_3,e_4\}$ at $p$, we let $\bar e_a=u\,e_a$ so that $\{\bar e_1,\cdots,\bar e_4\}$ is a  $\bar g$-orthonormal frame at $p$. Given any $X\in T_p(B\times S^{n-2})$, we will denote $X^H$ and $X^V$ to be the corresponding projection of $X$ onto the tangent space of $B$ and $S^{n-2}$ respectively. We then compute the curvature tensor for $\bar g$ so that
\begin{align*}
\bar{R}_{abcd}
={}&K_h\bigl(\langle \bar e_a^H,\bar e_c^H\rangle\langle \bar e_b^H,\bar e_d^H\rangle
-\langle \bar e_a^H,\bar e_d^H\rangle\langle \bar e_b^H,\bar e_c^H\rangle\bigr)\\
&+\langle \bar e_a^V,\bar e_c^V\rangle\langle \bar e_b^V,\bar e_d^V\rangle
-\langle \bar e_a^V,\bar e_d^V\rangle\langle \bar e_b^V, \bar e_c^V\rangle.
\end{align*}
Using the identity $\langle \bar e_a^V,\bar e_b^V\rangle=\delta_{ab}-\langle \bar e_a^H,\bar e_b^H\rangle$, we have
\begin{align*}
\bar{R}_{abab}
&=1-|\bar e_a^H|^2-|\bar e_b^H|^2+(K_h+1)|\bar e_a^H\wedge \bar e_b^H|^2,\\
\bar{R}_{1234}
&=(K_h+1)\bigl(\langle \bar e_1^H,\bar e_3^H\rangle\langle \bar e_2^H,\bar e_4^H\rangle
-\langle \bar e_1^H,\bar e_4^H\rangle\langle \bar e_2^H,\bar e_3^H\rangle\bigr).
\end{align*}
Consequently, 
\begin{align*}
&\bar{R}_{1313}+\bar{R}_{1414}+\bar{R}_{2323}
+\bar{R}_{2424}-2\bar{R}_{1234}\\
&\quad=4-2\sum_{a=1}^4|\bar e_a^H|^2
+(K_h+1)\bigl(
|\bar e_1^H\wedge \bar e_3^H-\bar e_2^H\wedge \bar e_4^H|^2
+|\bar e_1^H\wedge \bar e_4^H+\bar e_2^H\wedge \bar e_3^H|^2\bigr)\\
&\quad\geq 4-2\sum_{a=1}^4|\bar e_a^H|^2.
\end{align*}
Next, we let $w=u^{-1}$ so that $g=w^{-2}\bar g$, then we compute the Riemann curvature of $g$ by
\begin{align*}
R_{abcd}
={}&w^2\bar{R}_{abcd}
+w\bigl(\nabla^2_{\bar g}w(\bar e_a,\bar e_c)\delta_{bd}
+\nabla^2_{\bar g}w(\bar e_b,\bar e_d)\delta_{ac}\\
&-\nabla^2_{\bar g}w(\bar e_a,\bar e_d)\delta_{bc}
-\nabla^2_{\bar g}w(\bar e_b,\bar e_c)\delta_{ad}\bigr)
-|\nabla_{\bar g}w|^2(\delta_{ac}\delta_{bd}-\delta_{ad}\delta_{bc})\\
={}&w^2\bar{R}_{abcd}
+w\bigl(\nabla^2_{h}w(\bar e_a^H,\bar e_c^H)\delta_{bd}
+\nabla^2_{h}w(\bar e_b^H,\bar e_d^H)\delta_{ac}\\
&-\nabla^2_{h}w(\bar e_a^H,\bar e_d^H)\delta_{bc}
-\nabla^2_{h}w(\bar e_b^H,\bar e_c^H)\delta_{ad}\bigr)
-|\nabla_{h}w|^2(\delta_{ac}\delta_{bd}-\delta_{ad}\delta_{bc})
\end{align*}
This implies
\begin{align*}
&R_{1313}+R_{1414}+R_{2323}+R_{2424}-2R_{1234}\\
&\quad=w^2\bigl(\bar{R}_{1313}+\bar{R}_{1414}
+\bar{R}_{2323}+\bar{R}_{2424}-2\bar{R}_{1234}\bigr)
+2w\sum_{a=1}^4\nabla_h^2w(\bar e_a^H,\bar e_a^H)-4|\nabla_hw|^2\\
&\quad\ge4w^2-4|\nabla_hw|^2
-2w\sum_{a=1}^4(wh-\nabla_h^2w)(\bar e_a^H,\bar e_a^H).
\end{align*}
Choose an $h$-orthonormal eigenbasis $v_1,v_2$ of
$wh-\nabla_h^2w$ with eigenvalues $\lambda_1,\lambda_2\ge0$.
It then follows that
\[
\sum_{a=1}^4(wh-\nabla_h^2w)(\bar e_a^H,\bar e_a^H)
=\lambda_1\sum_{a=1}^4\langle \bar e_a,v_1\rangle^2+\lambda_2\sum_{a=1}^4\langle \bar e_a,v_2\rangle^2
\le\lambda_1+\lambda_2=2w-\Delta_hw.
\]
Thus
\begin{align*}
&R_{1313}+R_{1414}+R_{2323}+R_{2424}-2R_{1234}\\
&\quad\ge4w^2-4|\nabla_hw|^2-2w(2w-\Delta_hw)\\
&\quad=2u^{-1}\Delta_h(u^{-1})-4|\nabla_h(u^{-1})|^2\\
&\quad=-2u^{-3}\Delta_hu\ge4.
\end{align*}
This now completes the proof.
\end{proof}

\begin{remark}\label{rem:rank-two-equations}
In Lemma~\ref{lem:conformal-product-pic}, assume moreover that
$-\Delta_hu=2u^3$ and that $B$ is oriented. Set
\[
\omega=u^3\,d\vol_h,\qquad \alpha=-\frac12*_hdu.
\]
Then $\omega$ has rank two and
\[
d\alpha=\omega,\qquad d^*\omega=2(n-1)\alpha,\qquad
\nabla_X\omega=-2X^\flat\wedge\alpha.
\]
In particular, $d\omega=0$ and $\Delta_H\omega=2(n-1)\omega$.
Thus these forms satisfy the rank-two equality equations in
Theorem~\ref{thm:spectral-rigidity} with $\sigma=4$.
Indeed, the warped-product connection gives
$\nabla_X\omega=X^\flat\wedge *_hdu$; the remaining identities
follow by contraction and $d(*_hdu)=(\Delta_hu)d\vol_h$.
\end{remark}

\subsection{Construction and proof of Theorem~\ref{thm:flexibility}}

In this section, we construct the example for Theorem \ref{thm:flexibility}. We fix a sufficiently large $L>0$. We then set $r_L=\sinh(2\sqrt6\,L)$ and $b_L(r)=1-\frac{\sinh^2r}{\sinh^2r_L}$. On the disk $B=\{x\in\mathbb R^2:|x|<r_L\}$, we define a Riemannian metric 
\[
{h_L=\coth^2r_L\left(b_L(r)^{-2}\,dr^2
+b_L(r)^{-1}\sinh^2r\,d\theta^2\right)}
\]
and a function $u_L^2=\frac{\tanh^2r_L}{6(1+r^2)}\,b_L(r)$.
We then consider the metric on $B\times S^{n-2}$ defined by
\(
g_L=u_L^2(h_L+g_{S^{n-2}}).
\)
We first verify the assumptions of Lemma~\ref{lem:conformal-product-pic}.
It is straightforward to check that the Gaussian curvature of $h_L$ is $-1$ at each point of $B$, and the eigenvalues of the symmetric $(0,2)$-tensor
$u_L^{-1}h_L-\nabla_{h_L}^2(u_L^{-1})$ with respect to $h_L$ are given by $6u_Lb_L(r)\frac{r^2(r^2+2)}{1+r^2}$ and $6u_L(1+r^2-r\coth r)$. Since $r\coth r\le1+r^2$, both eigenvalues are nonnegative, so $u_L^{-1}h_L-\nabla_{h_L}^2(u_L^{-1})\ge0$. Moreover, we compute
\begin{align*}
-\frac{\Delta_{h_L}u_L}{6u_L^3}&=r\coth r-2+\frac3{1+r^2}\\
&+\frac{2\big((1+r^2)\cosh r-r\sinh r\big)^2
+r^2(r^2+2)\sinh^2r}{(1+r^2)\sinh^2r_L}\\
&\ge r\coth r-2+\frac3{1+r^2}.
\end{align*}
Moreover, we note that $r\coth r-2+\frac3{1+r^2}\geq \frac{1}{2}$ for all $0\le r\le1$,
and for $r\ge 1$ we have $r\coth r\ge r$ and
\[
r-2+\frac3{1+r^2}-\frac13
=\frac{3(r-1)^3+2(r-3/2)^2+1/2}{3(1+r^2)}>0.
\]
Thus we have
$-\Delta_{h_L}u_L\ge2u_L^3$, and Lemma~\ref{lem:conformal-product-pic}
implies that $g_L$ is $4$-PIC.

Next, we show that $g_L$ extends smoothly to a Riemannian metric on $S^n$. We first write
\[
g_L=\frac1{6(1+r^2)}\Big(
\frac{dr^2}{b_L(r)}+\sinh^2r\,d\theta^2
+\tanh^2r_L\,b_L(r)g_{S^{n-2}}\Big).
\]
At $r=0$, the functions $b_L(r)$ and $(\sinh r/r)^2$
are smooth functions of $r^2$ with value one, so the
$S^1$ factor collapses smoothly. At $r=r_L$, we have $b_L(r_L)=0$ and $b_L'(r_L)=-2\coth r_L.$ Hence, the $S^{n-2}$ factor collapses smoothly at $r=r_L$. Thus, $g_L$ extends to a smooth $4$-PIC metric on
$S^1*S^{n-2}\cong S^n$. The $4$-PIC bound extends
to both collapsed orbits by continuity.

Let $\rho=\operatorname{arsinh}r$ and
$\rho_L=\operatorname{arsinh}r_L=2\sqrt6\,L$.
Fix $p\in S^{n-2}$ and define
\[
D_L=\{[r,\theta,p]:0\le\rho\le3\rho_L/4\}.
\]
This is a smooth embedded compact disk contained in $B\times\{p\}$,
with induced metric
\[
g_{D,L}=u_L^2h_L
=\frac1{6(1+r^2)}\Big(\frac{dr^2}{b_L(r)}
+\sinh^2r\,d\theta^2\Big).
\]
The surface $B\times\{p\}$ is totally geodesic in $(S^n,g_L)$.
Choose a $g_{S^{n-2}}$-orthonormal basis $\{e_a\}_{a=1}^{n-2}$
of $T_pS^{n-2}$. Then $N_a=u_L^{-1}e_a$ form a
$g_L$-orthonormal frame of the normal bundle of $D_L$,
parallel with respect to the normal connection of $g_L$. Let $E_1,E_2$ be a local $g_{D,L}$-orthonormal frame. Then
\[
\sum_{i=1}^2R(E_i,N_a,E_i,N_b)
=-\frac{\Delta_{g_{D,L}}u_L}{u_L}\delta_{ab}.
\]
Given a normal variation $V=\sum_{a=1}^{n-2} f_a\,N_a$ that vanishes on $\partial D_L$, the
divergence theorem gives
\[
\begin{aligned}
I(V,V)
&=\sum_a\int_{D_L}\Big(
|\nabla f_a|_{g_{D,L}}^2+
\frac{\Delta_{g_{D,L}}u_L}{u_L}f_a^2\Big)d\vol_{g_{D,L}}\\
&=\sum_a\int_{D_L}u_L^2\,
\Big|\nabla\Big(\frac{f_a}{u_L}\Big)\Big|_{g_{D,L}}^2
 d\vol_{g_{D,L}}\ge0.
\end{aligned}
\]
Hence, $D_L$ is a stable minimal surface.

Since $0<b_L\le1$, $\frac{d\rho}{dr}=\frac1{\sqrt{1+r^2}}$, and
$\sinh^2r\ge r^2+\frac{r^4}{3}$ implies
$\frac{\sinh r}{\sqrt{1+r^2}}\ge\frac r2\ge\frac\rho2$, we obtain
\[
g_{D,L}\ge\frac16\,d\rho^2+\frac1{24}\rho^2d\theta^2.
\]
Consequently,
\[
\sup_{x\in D_L}d_{D_L}(x,\partial D_L)
\ge d_{D_L}(0,\partial D_L)
\ge\frac{3\rho_L}{4\sqrt6}=\frac32L.
\]
This proves the second inequality in Theorem~\ref{thm:flexibility}.

Next, we estimate the Urysohn $1$-width from below. The map
\[
P:S^n\longrightarrow\{[r,\theta,p]:0\le r\le r_L\},\qquad
P([r,\theta,z])=[r,\theta,p],
\]
is a continuous retraction. The formula for $g_L$ shows that
$P$ is distance non-increasing, so intrinsic and ambient
distances agree on $\{[r,\theta,p]:0\le r\le r_L\}$. Next, we define $F:\{[r,\theta,p]:0\le r\le r_L\}
\rightarrow \mathbb R^2$ by
\[F([r,\theta,p])=(\rho\cos\theta,\rho\sin\theta).
\]
Then $F$ is a homeomorphism onto the Euclidean disk of radius $\rho_L$.
The same metric estimate holds on the full horizontal disk and gives $|dF(\xi)|^2\le24|\xi|_{g_L}^2$ for tangent vectors $\xi$.
In particular, this implies
\(
|F(x)-F(y)|\le2\sqrt6\,d_{S^n}(x,y)
\)
for all points on $\{[r,\theta,p]:0\le r\le r_L\}$. We then write $Q=[-\rho_L/2,\rho_L/2]^2$ so that $Q\subset \bar B_{\rho_L}(0)$. By the Lebesgue covering lemma (see \cite{Guth2017}), we have $\UW_1(Q)\ge\rho_L$. Since $F$ restricts to a $\sqrt{24}$-Lipschitz homeomorphism
from $F^{-1}(Q)$ onto $Q$, we obtain
\[
\UW_1(S^n,g_L)
\ge \UW_1(F^{-1}(Q))
\ge \frac{\UW_1(Q)}{\sqrt{24}}
\ge \frac{\rho_L}{\sqrt{24}}
=L,
\]
where $F^{-1}(Q)$ carries the distance induced from $(S^n,g_L)$.
This completes the proof.

\vspace{0.5cm}

\section{Higher-rank rigidity}\label{sec:rigidity}

We prove $\nabla_X\alpha=\frac12\iota_X\omega$ whenever
$\rank\omega\ge4$ at some point. Together with the equality
equations, this makes $\alpha$ a special Killing one-form. We first
derive the algebraic identities needed in the full-rank case, then
use a local conformal splitting for intermediate rank.

\subsection{Curvature identities}

Throughout this section, we assume that equality holds in
Theorem~\ref{thm:spectral-rigidity}. We work on the universal cover $\pi:\widetilde M\to M$, equipped
with the lifted metric, also denoted by $g$.  Proposition~\ref{prop:equality-basic} implies that there is a nonzero pair $(\alpha,\omega)\in\Omega^1_{\dstar}(\widetilde M)\oplus
\Omega^2_{\dd}(\widetilde M)$ satisfying
\[
        \omega=\dd\alpha,
        \quad
        \dstar\omega=\frac{n-1}{2}\sigma\,\alpha,
        \quad
        \nabla_X\omega=-\frac{\sigma}{2}X^\flat\wedge\alpha,
        \quad
        \Q(\omega)=\frac{n-2}{2}\sigma\,\omega .
\]

Differentiating the equality equation $\nabla_X\omega=-\frac{\sigma}{2}X^\flat\wedge\alpha$, we get the identity
\begin{equation}\label{eq:curv-alpha-omega}
        R(X,Y)\omega
        =
        \frac{\sigma}{2}
        \bigl(
        X^\flat\wedge\nabla_Y\alpha
        -
        Y^\flat\wedge\nabla_X\alpha
        \bigr).
\end{equation}
Following \cite{Semmelmann2001}, given a vector field $X$, we may define $R^+(X):\Lambda^pT^*\widetilde M\to\Lambda^{p+1}T^*\widetilde M $ and $R^-(X):\Lambda^pT^*\widetilde M\to\Lambda^{p-1}T^*\widetilde M $ by
\[
        R^+(X)\omega:=\sum_j e_j^\flat\wedge R(X,e_j)\omega,
        \qquad
        R^-(X)\omega:=\sum_j\iota_{e_j}R(X,e_j)\omega .
\]

\begin{lemma}\label{lem:Rpm-equality}
We have\begin{align}
        R^+(X)\omega
        &=
        -\frac{\sigma}{2}X^\flat\wedge\omega,
        \label{eq:Rplus}\\
        R^-(X)\omega
        &=
        -\frac{n-2}{2}\sigma\nabla_X\alpha
        =
        -\frac{n-2}{n-1}\nabla_X\dstar\omega .
        \label{eq:Rminus}
\end{align}
\end{lemma}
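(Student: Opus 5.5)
The plan is to substitute the curvature identity \eqref{eq:curv-alpha-omega} into the definitions of $R^\pm(X)$ and simplify with the equality equations; the Weitzenb\"ock term $\Q(\omega)=\frac{n-2}{2}\sigma\omega$ and the Bianchi identity serve as cross-checks. Write $\beta_Y:=\nabla_Y\alpha$, and let $B$ be the $(0,2)$-tensor $B(Y,Z)=(\nabla_Y\alpha)(Z)$. Its antisymmetric part is $\frac12\dd\alpha=\frac12\omega$, and its trace is $-\dstar\alpha=0$.

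\textbf{Step 1 ($R^+$).} By \eqref{eq:curv-alpha-omega},
\[
R^+(X)\omega=\frac{\sigma}{2}\sum_j e_j^\flat\wedge\bigl(X^\flat\wedge\nabla_{e_j}\alpha-e_j^\flat\wedge\nabla_X\alpha\bigr).
\]
The second term vanishes because $e_j^\flat\wedge e_j^\flat=0$. The first term equals $-X^\flat\wedge\sum_j e_j^\flat\wedge\nabla_{e_j}\alpha=-X^\flat\wedge\dd\alpha=-X^\flat\wedge\omega$. This gives \eqref{eq:Rplus} directly.

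\textbf{Step 2 ($R^-$).} Again by \eqref{eq:curv-alpha-omega},
\[
R^-(X)\omega=\frac{\sigma}{2}\sum_j\iota_{e_j}\bigl(X^\flat\wedge\nabla_{e_j}\alpha-e_j^\flat\wedge\nabla_X\alpha\bigr).
\]
Expand with $\iota_{e_j}(a\wedge b)=a(e_j)b-b(e_j)a$. The first piece gives
\[
\sum_j\bigl(X_j\nabla_{e_j}\alpha-(\nabla_{e_j}\alpha)(e_j)X^\flat\bigr)=\nabla_X\alpha\text{-transpose}+(\dstar\alpha)X^\flat,
\]
that is, the one-form $Z\mapsto B(Z,X)$, since $\dstar\alpha=0$. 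The second piece gives $-\sum_j\bigl(\nabla_X\alpha-(\nabla_X\alpha)(e_j)e_j^\flat\bigr)=-(n-1)\nabla_X\alpha$. So
\[
R^-(X)\omega=\frac{\sigma}{2}\bigl(B(\cdot,X)-(n-1)B(X,\cdot)\bigr).
\]
The key point is to identify $B(\cdot,X)$ with $B(X,\cdot)$. This requires showing the symmetric part of $B$ is pure trace-free in a way compatible with the claim, i.e.\ that the target identity is equivalent to $B(\cdot,X)=B(X,\cdot)-\iota_X\omega$, which always holds since the antisymmetric part of $B$ is $\frac12\omega$. Substituting, $R^-(X)\omega=\frac{\sigma}{2}\bigl(-(n-2)\nabla_X\alpha-\iota_X\omega\bigr)$.

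\textbf{Step 3 (main obstacle).} The claimed formula then forces the extra $-\frac{\sigma}{2}\iota_X\omega$ term to be absorbed, so I expect the sign and ordering conventions for $R(X,Y)$ acting on forms, and for $\iota$ versus wedge, to matter here. I would recheck with the trace identity. Contracting: $\sum_X \iota_X R^-(X)\omega$ should relate to $\Q(\omega)=\sum e^j\wedge\iota_{e_i}R(e_i,e_j)\omega$, and $\sum_j e^j\wedge R^-(e_j)\omega$ equals $\pm\Q(\omega)$. Computing this from the Step 2 formula gives
\[
\frac{\sigma}{2}\bigl(-(n-2)\dd\alpha\bigr)+\text{(contribution of }\iota\omega),
\]
and consistency with $\Q(\omega)=\frac{n-2}{2}\sigma\omega$ pins down the convention. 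Under the paper's convention for $R(X,e_j)$ (which matches \eqref{eq:curv-alpha-omega} obtained by differentiating $\nabla\omega$), I expect the cross term from the first piece to appear as $\nabla_X\alpha$ rather than its transpose. The $\iota_X\omega$ term then cancels, yielding $-\frac{n-2}{2}\sigma\nabla_X\alpha$.

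Finally, the second equality in \eqref{eq:Rminus} is immediate from $\dstar\omega=\frac{n-1}{2}\sigma\alpha$.
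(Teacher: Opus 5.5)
Your Step 1 is exactly the paper's computation. For $R^-$ your strategy is also the paper's: substitute \eqref{eq:curv-alpha-omega}, expand the contractions, and use $\dstar\alpha=0$. However, Step 2 contains a real error, and Step 3 does not repair it. The first piece is
\[
\sum_j\iota_{e_j}\bigl(X^\flat\wedge\nabla_{e_j}\alpha\bigr)=\sum_j\langle X,e_j\rangle\nabla_{e_j}\alpha-\sum_j(\nabla_{e_j}\alpha)(e_j)\,X^\flat=\nabla_X\alpha+(\dstar\alpha)X^\flat ,
\]
because $\nabla$ is linear in its direction slot. Evaluated on $Z$, this gives $\sum_j\langle X,e_j\rangle(\nabla_{e_j}\alpha)(Z)=B(X,Z)$, not $B(Z,X)$. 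So the first piece is $\nabla_X\alpha$ itself, and no $\iota_X\omega$ term ever arises. Immediately, $R^-(X)\omega=\frac{\sigma}{2}\bigl(1-(n-1)\bigr)\nabla_X\alpha=-\frac{n-2}{2}\sigma\nabla_X\alpha$.

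As written, your Step 2 instead reaches the false identity $R^-(X)\omega=-\frac{\sigma}{2}\bigl((n-2)\nabla_X\alpha+\iota_X\omega\bigr)$. Note that $B(\cdot,X)$ and $B(X,\cdot)$ genuinely differ by $\iota_X\omega\neq0$, so they cannot be ``identified.'' Step 3 then substitutes the correct formula on the strength of an expectation about conventions, which is not a proof.

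No convention is actually in play. The sign convention for $R(X,Y)$ acting on forms is already built into \eqref{eq:curv-alpha-omega}, which is your only curvature input. The expansion $\iota_{e_j}(a\wedge b)=a(e_j)b-b(e_j)a$ for one-forms is convention-free.

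The trace check you propose would have exposed the slip rather than ``pinned down a convention.'' It uses three identities: $\sum_j e^j\wedge R^-(e_j)\omega=-\Q(\omega)$, $\sum_j e^j\wedge\nabla_{e_j}\alpha=\omega$, and $\sum_j e^j\wedge\iota_{e_j}\omega=2\omega$. With these, your Step 2 formula gives $\Q(\omega)=\frac{n}{2}\sigma\omega$, contradicting $\Q(\omega)=\frac{n-2}{2}\sigma\omega$ from \eqref{eq:equality-basic}. The correct formula gives exactly $\frac{n-2}{2}\sigma\omega$.

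With the first piece computed correctly, your argument becomes the paper's proof verbatim. The final equality via $\dstar\omega=\frac{n-1}{2}\sigma\alpha$ is fine as you have it.
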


\begin{proof}
Using \eqref{eq:curv-alpha-omega}, we compute
\[
\begin{aligned}
        R^+(X)\omega
        &=
        \frac{\sigma}{2}
        \sum_j e_j^\flat\wedge
        \bigl(
        X^\flat\wedge\nabla_{e_j}\alpha
        -
        e_j^\flat\wedge\nabla_X\alpha
        \bigr) \\
        &=
        -\frac{\sigma}{2}
        X^\flat\wedge
        \sum_j e_j^\flat\wedge\nabla_{e_j}\alpha
        =
        -\frac{\sigma}{2}X^\flat\wedge\dd\alpha
        =
        -\frac{\sigma}{2}X^\flat\wedge\omega .
\end{aligned}
\]
This gives \eqref{eq:Rplus}. Similarly,
\[
\begin{aligned}
        R^-(X)\omega
        &=
        \frac{\sigma}{2}
        \sum_j
        \iota_{e_j}
        \bigl(
        X^\flat\wedge\nabla_{e_j}\alpha
        -
        e_j^\flat\wedge\nabla_X\alpha
        \bigr).
\end{aligned}
\]
The first sum is
\[
        \sum_j\iota_{e_j}
        \bigl(X^\flat\wedge\nabla_{e_j}\alpha\bigr)
        =
        \nabla_X\alpha+X^\flat\,\dstar\alpha
        =
        \nabla_X\alpha,
\]
while the second sum is
\[
        \sum_j\iota_{e_j}
        \bigl(e_j^\flat\wedge\nabla_X\alpha\bigr)
        =
        (n-1)\nabla_X\alpha .
\]
Therefore
\[
        R^-(X)\omega
        =
        -\frac{n-2}{2}\sigma\nabla_X\alpha
        =
        -\frac{n-2}{n-1}\nabla_X\dstar\omega.
\]
\end{proof}

With our convention, the curvature operator $\R:\Lambda^2T_x\widetilde M\to\Lambda^2T_x\widetilde M$ is characterized by
\begin{equation}\label{eq:K-convention}
\ip{\R(X\wedge Y)}{Z\wedge W}
=
R(X,Y,Z,W).
\end{equation}
We use the metric to identify vectors with one-forms and bivectors with two-forms.  In particular, a two-form $\eta$ is identified with the skew endomorphism $\eta^\sharp$ defined by
\[
\ip{\eta^\sharp X}{Y}=\eta(X,Y).
\]
We denote $J:=\omega^\sharp$, thus
\begin{equation}\label{eq:J-def}
\ip{JX}{Y}=\omega(X,Y),\quad\text{and}\quad \iota_X\omega=(JX)^\flat.
\end{equation}
We use the same notation for the skew endomorphism $J$ and its derivation action on $\Lambda^2T_x\widetilde M$,
\[
J(X\wedge Y)=JX\wedge Y+X\wedge JY.
\]
Under the above identification of two-forms with skew endomorphisms, this action satisfies
\begin{equation}\label{eq:derivation-commutator}
(J\eta)^\sharp=[J,\eta^\sharp].
\end{equation}

\begin{lemma}
\label{lem:semmelmann-equality-identities}
For every $x\in\widetilde M$ and every $X,Y\in T_x\widetilde M$, we have
\begin{align}
        &\left(\Ric\circ J+J\circ\Ric-2(\R\omega)^\sharp
        -\frac{n-2}{2}\sigma J\right) (X\wedge Y) =0,
        \label{eq:semm0}\\
        &\bigl(J\circ\R+\R\circ J-\frac{\sigma}{2}J\bigr)(X\wedge Y)
        =0,
        \label{eq:semm1}\\
        &(\Ric\circ J-J\circ\Ric)X
        =
        -(n-2)\sigma
        \left(
        \nabla_X\alpha-\frac12\iota_X\omega
        \right).
        \label{eq:semm2}
\end{align}
Here $(\R\omega)^\sharp$ is extended to $\Lambda^2T_x\widetilde M$ by
$(\R\omega)^\sharp(X\wedge Y)=(\R\omega)^\sharp X\wedge Y
+X\wedge(\R\omega)^\sharp Y$.
\end{lemma}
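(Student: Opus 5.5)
The plan is to derive all three identities from the curvature identity \eqref{eq:curv-alpha-omega}, the formulas of Lemma~\ref{lem:Rpm-equality}, and the pointwise equation $\Q(\omega)=\frac{n-2}{2}\sigma\,\omega$ from Proposition~\ref{prop:equality-basic}. The organizing principle follows \cite{Semmelmann2001}. The Bochner endomorphism can be written as $\Q(\omega)=\sum_j e_j^\flat\wedge\iota_{e_i}R(e_i,e_j)\omega$, which links it to $R^\pm$. Moreover, $R(X,Y)\omega$ is the derivation action of $\R(X\wedge Y)$ on $\omega$, i.e.\ $R(X,Y)\omega=-[\R(X\wedge Y)^\sharp,J]$ up to the sign convention in \eqref{eq:K-convention}. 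Via \eqref{eq:derivation-commutator} this equals $\pm J(\R(X\wedge Y))$.

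The first step is an algebraic rewriting of \eqref{eq:curv-alpha-omega}. I identify the left-hand side with $-(J\circ\R)(X\wedge Y)$, sign to be checked. I then contract in two ways.

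Contracting against $e_j$ in the $Y$ slot with $\iota$ reproduces $R^-(X)\omega$. In endomorphism language this contraction equals $\frac12(\Ric\circ J+J\circ\Ric)X-(\R\omega)^\sharp X$; this is the standard identity $\sum_j\iota_{e_j}[\R(X\wedge e_j)^\sharp,J]$. Lemma~\ref{lem:Rpm-equality} evaluates $R^-(X)\omega$ as $-\frac{n-2}{2}\sigma\nabla_X\alpha$.

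The second computation is the Bochner term. Using $\Q=\sum_i e^j\wedge\iota_{e_i}R(e_i,e_j)$, i.e.\ the Weitzenb\"ock formula $\Q(\omega)=\Ric\circ J+J\circ\Ric-2(\R\omega)^\sharp$ as a skew endomorphism, the identity $\Q(\omega)=\frac{n-2}{2}\sigma\omega$ gives \eqref{eq:semm0} at the level of endomorphisms. Extending by derivations to $\Lambda^2$ yields \eqref{eq:semm0} evaluated on $X\wedge Y$, since the derivation extension is linear in the endomorphism.

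For \eqref{eq:semm2}, I write $R^-(X)\omega$ in two ways. One is the symmetric combination above, giving
\[\tfrac12(\Ric J+J\Ric)X-(\R\omega)^\sharp X=-\tfrac{n-2}{2}\sigma\nabla_X\alpha.\]
The other is obtained from \eqref{eq:semm0}, giving
\[\tfrac12(\Ric J+J\Ric)X-(\R\omega)^\sharp X=\tfrac{n-2}{4}\sigma JX.\]
These two do not directly produce the antisymmetric combination $\Ric J-J\Ric$. The actual source of the commutator is the other contraction: with $\iota_{e_j}$ hitting the $J$-factor rather than the $\R$-factor, the terms $\Ric J$ and $J\Ric$ enter with opposite signs. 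So I will carefully expand $\sum_j\iota_{e_j}\big(J\R(X\wedge e_j)\big)$ using the first Bianchi identity, getting $\frac12(J\Ric-\Ric J)X$ plus the already-known terms. Subtracting the two expressions should give \eqref{eq:semm2}, with $\iota_X\omega=JX$ producing the $-\frac12\iota_X\omega$ term.

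For \eqref{eq:semm1}, I use $R^+(X)\omega=-\frac\sigma2X^\flat\wedge\omega$. The idea is to express the three-form $\sum_je_j\wedge J\R(X\wedge e_j)$ by the Bianchi identity as $(\R\circ J)$-type terms wedged with $X$. Pairing against $Y\wedge Z\wedge W$ and using the symmetry of $\R$ turns the wedge identity into the bilinear statement $\langle(J\R+\R J)(X\wedge Y),Z\wedge W\rangle=\frac\sigma2\langle J(X\wedge Y),Z\wedge W\rangle$. A cleaner alternative, which I would try first, is to pair \eqref{eq:curv-alpha-omega} with $Z\wedge W$ and symmetrize using the pair symmetry $\langle\R a,b\rangle=\langle a,\R b\rangle$. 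This gives $\langle J\R(X\wedge Y),Z\wedge W\rangle$ in terms of $\nabla\alpha$. Combining this with its swap in $(X,Y)\leftrightarrow(Z,W)$ shows that $J\R+\R J$ is expressible through $\nabla\alpha$ contracted symmetrically. Skew-symmetrizing $\nabla\alpha$ via $d\alpha=\omega$ should then yield exactly $\frac\sigma2 J$, provided the symmetric part of $\nabla\alpha$ drops out by the pair symmetry.

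The main obstacle is sign and normalization bookkeeping across the conventions: $\Q$, $\R$, the derivation action, and $\iota$ versus $\sharp$. The decisive point is isolating the antisymmetric combination in \eqref{eq:semm2} via Bianchi. I would fix conventions by testing each identity on the round sphere, where $\R=\id$, $\Ric=(n-1)$, $\sigma=4$ and $\nabla\alpha=\frac12\omega$; there \eqref{eq:semm1} reads $2J=2J$ and \eqref{eq:semm2} reads $0=0$.
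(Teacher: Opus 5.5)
\textbf{Verdict.} Your arguments for \eqref{eq:semm0} and \eqref{eq:semm1} are correct, but the derivation of \eqref{eq:semm2} has a genuine gap: it rests on a false contraction identity.

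\textbf{The parts that work.} Your treatment of \eqref{eq:semm0} is the paper's: the endomorphism form $\Q(\omega)^\sharp=\Ric\circ J+J\circ\Ric-2(\R\omega)^\sharp$ is Semmelmann's Lemma~7.2.2(2). Your ``cleaner alternative'' for \eqref{eq:semm1} is correct and genuinely different from the paper, which uses $R^+$ and Semmelmann's identity~(3). Its ingredients are: $R(X,Y)\omega=J(\R(X\wedge Y))$, $\R$ is self-adjoint, and the derivation $J$ is skew-adjoint on $\Lambda^2$. Pairing with $Z\wedge W$ and adding the swapped identity, the symmetric part of $\nabla\alpha$ cancels. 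The skew part is $\frac12\dd\alpha=\frac12\omega$, so one gets exactly $\frac{\sigma}{2}\ip{J(Z\wedge W)}{X\wedge Y}$. This shows \eqref{eq:semm1} follows from \eqref{eq:curv-alpha-omega} and $\dd\alpha=\omega$ alone.

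\textbf{The gap in \eqref{eq:semm2}.} The ``standard identity'' $R^-(X)\omega=\frac12(\Ric\circ J+J\circ\Ric)X-(\R\omega)^\sharp X$ is false, and no sign convention rescues it. Directly,
\[
R^-(X)\omega=\sum_j[R(X,e_j),J]e_j=\sum_jR(X,e_j)Je_j-J\Ric X .
\]
The first Bianchi identity gives $\sum_jR(X,e_j)Je_j=(\R\omega)^\sharp X$, so $R^-(X)\omega=(\R\omega)^\sharp X-J\Ric X$. The unsymmetrized $J\circ\Ric$ is exactly where the commutator comes from.

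A concrete check: take a product of round two-spheres of curvatures $1$ and $\kappa\ne1$, with $\omega=e^1\wedge e^3$ mixing the factors. Then $\R\omega=0$ and $R^-(e_1)\omega=-e_3$, whereas your formula gives $\frac{1+\kappa}{2}e_3$.

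This should also have been visible internally. Your identity, combined with the correct consequence of \eqref{eq:semm0} that you also display, would give $\nabla_X\alpha=\pm\frac12\iota_X\omega$ in every rank with no work. That is the whole conclusion of Section~\ref{sec:rigidity} for free, and with your sign it even contradicts $\dd\alpha=\omega$.

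The proposed repair does not help. By your own identification $R(X,e_j)\omega=\pm J(\R(X\wedge e_j))$, the ``other contraction'' $\sum_j\iota_{e_j}(J\R(X\wedge e_j))$ is $\pm R^-(X)\omega$ itself, not a second quantity. ``Subtracting the two expressions'' therefore yields nothing, or a false statement if the incorrect one is used. The round-sphere test is blind to all of this, because $[\Ric,J]=0$ there, which is precisely the term \eqref{eq:semm2} is about.

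\textbf{The fix.} Write $J\circ\Ric=\frac12(\Ric\circ J+J\circ\Ric)-\frac12[\Ric,J]$ and eliminate $(\R\omega)^\sharp$ using \eqref{eq:semm0}. This gives $R^-(X)\omega=\frac12[\Ric,J]X-\frac{n-2}{4}\sigma JX$, which is Semmelmann's identity $2R^-(X)\omega+\iota_X\Q(\omega)=[\Ric,J]X$ that the paper cites. Comparing with \eqref{eq:Rminus} then gives \eqref{eq:semm2}.
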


\begin{proof}
We apply \cite[Sec.~7.2, Lemma~7.2.2]{Semmelmann2001}
to the two-form $\omega$. First, \cite[Lemma~7.2.2(2)]{Semmelmann2001} gives
\[
        (Y^\flat\wedge\iota_X-X^\flat\wedge\iota_Y)\Q(\omega)
        =
        -
        \bigl(
        \Ric\circ J+J\circ\Ric-2(\R\omega)^\sharp
        \bigr)(X\wedge Y),
\]
where the skew endomorphism in parentheses is extended to $\Lambda^2T_x\widetilde M$
as a derivation. Since $\Q(\omega)=\frac{n-2}{2}\sigma\,\omega$, this becomes
\[
        -
        \frac{n-2}{2}\sigma J(X\wedge Y)
        =
        -
        \bigl(
        \Ric\circ J+J\circ\Ric-2(\R\omega)^\sharp
        \bigr)(X\wedge Y).
\]
This gives \eqref{eq:semm0}.

Next, \cite[Sec.~7.2, Lemma~7.2.2(3)]{Semmelmann2001} gives
\[
        \iota_XR^+(Y)\omega-\iota_YR^+(X)\omega
        =
        -
        (J\circ\R+\R\circ J)(X\wedge Y).
\]
Using \eqref{eq:Rplus}, we compute
\[
\begin{aligned}
        \iota_XR^+(Y)\omega-\iota_YR^+(X)\omega
        &=
        -\frac{\sigma}{2}
        \iota_X(Y^\flat\wedge\omega)
        +
        \frac{\sigma}{2}
        \iota_Y(X^\flat\wedge\omega)  \\
        &=
        -\frac{\sigma}{2}
        (X^\flat\wedge\iota_Y-Y^\flat\wedge\iota_X)\omega .
\end{aligned}
\]
Under the metric identification of two-forms and bivectors,
\[
        (X^\flat\wedge\iota_Y-Y^\flat\wedge\iota_X)\omega
        =
        J(X\wedge Y).
\]
Therefore
\[
        (J\circ\R+\R\circ J)(X\wedge Y)
        =
        \frac{\sigma}{2}J(X\wedge Y),
\]
which is \eqref{eq:semm1}.

Finally, in the notation of \cite{Semmelmann2001}, $\Q=2q(R)$,
so the same lemma gives
\[
        2R^-(X)\omega+\iota_X\Q(\omega)
        =
        (\Ric\circ J-J\circ\Ric)X .
\]
Using \eqref{eq:Rminus} and
$\Q(\omega)=\frac{n-2}{2}\sigma\,\omega$, we obtain
\[
        (\Ric\circ J-J\circ\Ric)X
        =
        -\frac{2(n-2)}{n-1}\nabla_X\dstar\omega
        +
        \frac{n-2}{2}\sigma\iota_X\omega .
\]
Since $\dstar\omega=\frac{n-1}{2}\sigma\,\alpha$, this gives
\eqref{eq:semm2}.
\end{proof}

\begin{lemma}\label{lem:full-rank-algebraic-reduction}
At every $x\in\wt M$,
\begin{equation}\label{eq:Ric-commutes-J2}
        [\Ric,J^2]=0.
\end{equation}
\end{lemma}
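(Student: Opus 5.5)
The plan is to combine the two derivation-type identities \eqref{eq:semm0} and \eqref{eq:semm1} of Lemma~\ref{lem:semmelmann-equality-identities}. I will not use \eqref{eq:semm2} directly. The key observation is that
\[
[\Ric J+J\Ric,\,J]=\Ric J^2+J\Ric J-J\Ric J-J^2\Ric=[\Ric,J^2].
\]
So it suffices to show that the skew endomorphism $\Ric\circ J+J\circ\Ric$ commutes with $J$.

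\emph{Step 1: turning \eqref{eq:semm0} into an endomorphism identity.} Set
\[
T:=\Ric\circ J+J\circ\Ric-2(\R\omega)^\sharp-\tfrac{n-2}{2}\sigma J.
\]
This is a skew endomorphism of $T_x\widetilde M$, since both $\Ric J+J\Ric$ and $J$ are skew. Identity \eqref{eq:semm0} says that its derivation extension vanishes on all of $\Lambda^2T_x\widetilde M$. For $n\ge3$ the derivation action of skew endomorphisms on $\Lambda^2$ is faithful. Concretely, $(TX\wedge Y+X\wedge TY)=0$ for all $X,Y$ forces $TX\in\operatorname{span}\{X\}$ for every $X$, so $T$ is a multiple of the identity, and skewness then gives $T=0$. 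Alternatively one can invoke \eqref{eq:derivation-commutator}: the map $\eta\mapsto[T,\eta^\sharp]$ vanishes on $\mathfrak{so}(n)$, and the centre of $\mathfrak{so}(n)$ is trivial for $n\ge3$. Either way,
\[
\Ric\circ J+J\circ\Ric=2(\R\omega)^\sharp+\tfrac{n-2}{2}\sigma J .
\]

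\emph{Step 2: $(\R\omega)^\sharp$ commutes with $J$.} Evaluate \eqref{eq:semm1}, which holds on all of $\Lambda^2T_x\widetilde M$ by linearity, at the bivector $\omega$ itself. By \eqref{eq:derivation-commutator}, $(J\omega)^\sharp=[J,J]=0$, so $J\omega=0$, and hence $\R(J\omega)=0$ and $\frac{\sigma}{2}J\omega=0$. What remains is $J(\R\omega)=0$. Applying \eqref{eq:derivation-commutator} again, this says $[J,(\R\omega)^\sharp]=0$.

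\emph{Step 3: conclusion.} By Steps 1 and 2, $\Ric J+J\Ric$ is a linear combination of $(\R\omega)^\sharp$ and $J$, and both commute with $J$. The commutator identity above then gives $[\Ric,J^2]=0$ at every point.

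The only step needing care is the faithfulness argument in Step 1, together with consistent bookkeeping of the sign and identification conventions (bivectors versus skew endomorphisms, \eqref{eq:K-convention} and \eqref{eq:derivation-commutator}). Since the final identity involves only commutators with $J$, overall sign conventions cannot affect the conclusion.
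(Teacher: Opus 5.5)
Your proposal is correct and follows the paper's proof essentially step for step: apply \eqref{eq:semm1} to $\omega$ (using $J\omega=0$) to get $[(\R\omega)^\sharp,J]=0$, read \eqref{eq:semm0} as an identity of skew endomorphisms, and take the commutator with $J$ to obtain $[\Ric J+J\Ric,J]=[\Ric,J^2]=0$. Your faithfulness argument in Step~1 spells out the justification that the paper compresses into the phrase ``since $n\ge4$''.
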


\begin{proof}
Since $J\omega=0$, applying \eqref{eq:semm1} to $\omega$ gives $J(\R\omega)=0$.
By \eqref{eq:derivation-commutator},
\begin{equation}\label{eq:Romega-commutes-J}
        [(\R\omega)^\sharp,J]=0.
\end{equation}

On the other hand, since $n\ge4$, \eqref{eq:semm0} is equivalent to the
skew-endomorphism identity
\[
        \Ric\circ J+J\circ\Ric-2(\R\omega)^\sharp
        =
        \frac{n-2}{2}\sigma J.
\]
Taking the commutator with $J$ and using \eqref{eq:Romega-commutes-J}, we get
\[
        [\Ric\circ J+J\circ\Ric,J]=0.
\]
Since
\[
        [\Ric\circ J+J\circ\Ric,J]
        =
        \Ric\circ J^2-J^2\circ\Ric,
\]
we obtain \eqref{eq:Ric-commutes-J2}.
\end{proof}

Next, we define
\begin{equation}\label{eq:max-rank-M0}
        2r:=\max_{\wt M}\rank\omega,
        \qquad
        \wt M_0:=\{x\in\wt M:\rank\omega_x=2r\}.
\end{equation}
We call $\wt M_0$ the regular set of $\omega$. Note that $\widetilde{M}_0$ is a non-empty open subset of $\tilde{M}$. In the remainder of this section, we study the equality problem according to
the value of $2r$. All arguments are carried out on connected components of
$\wt M_0$.

\subsection{Full rank}

Assume that $2r=n$. We fix a point $x\in\wt M_0$ and choose an orthonormal basis of
$T_x\wt M$ such that
\begin{equation}\label{eq:full-rank-normal-form}
        \omega=\sum_{a=1}^r\lambda_a e^{p_a}\wedge e^{q_a},
        \qquad
        p_a=2a-1,\quad q_a=2a,\quad \lambda_a\ne0.
\end{equation}
Then $Je_{p_a}=\lambda_a e_{q_a}$ and $Je_{q_a}=-\lambda_a e_{p_a}$.

Since $\Ric$ is symmetric and $J^2$ is self-adjoint, Lemma~\ref{lem:full-rank-algebraic-reduction} implies that $\Ric$ preserves each eigenspace of $J^2$. 
The next lemma gives the curvature identities needed to prove
$[\Ric,J]=0$.

\begin{lemma}\label{lem:pairwise-diag-full}
With the notation above, for every $a\ne b$ set
\[
D_{ab}:=(R_{p_a p_b p_a p_b}+R_{p_a q_b p_a q_b})
        -(R_{q_a p_b q_a p_b}+R_{q_a q_b q_a q_b}).
\]
Then
\[
        D_{ab}=0.
\]
\end{lemma}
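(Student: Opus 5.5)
The idea is to read off $D_{ab}=0$ from the algebraic identity \eqref{eq:semm1}, $J\circ\R+\R\circ J=\frac{\sigma}{2}J$ on $\Lambda^2T_x\wt M$, by pairing it against suitable basis bivectors of the adapted frame \eqref{eq:full-rank-normal-form}. Two facts make this mechanical. First, $\R$ is symmetric on $\Lambda^2$ by \eqref{eq:K-convention}. Second, the derivation action of the skew endomorphism $J$ is skew-adjoint on $\Lambda^2$. Hence for bivectors $\xi,\eta$,
\[
\ip{(J\R+\R J)\xi}{\eta}=\ip{\R(J\xi)}{\eta}-\ip{\R\xi}{J\eta}=\ip{\frac{\sigma}{2}J\xi}{\eta}.
\]
In the adapted frame we have $Je_{p_c}=\lambda_ce_{q_c}$ and $Je_{q_c}=-\lambda_ce_{p_c}$. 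So $J$ maps each of the four bivectors $e_{p_a}\wedge e_{p_b}$, $e_{p_a}\wedge e_{q_b}$, $e_{q_a}\wedge e_{p_b}$, $e_{q_a}\wedge e_{q_b}$ into their span. Every pairing above therefore becomes an explicit linear combination of curvature components $R_{ijkl}$ with indices in $\{p_a,q_a,p_b,q_b\}$, with coefficients $\pm\lambda_a,\pm\lambda_b$.

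I will use two such pairings, chosen so that the $\lambda_b$-coefficients group exactly into the two halves of $D_{ab}$.

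\emph{First pairing:} $\xi=e_{p_a}\wedge e_{p_b}$, $\eta=e_{p_a}\wedge e_{q_b}$. Here $J\xi$ contains $\lambda_b\,e_{p_a}\wedge e_{q_b}$ and $J\eta$ contains $-\lambda_b\,e_{p_a}\wedge e_{p_b}$. The $\lambda_b$-part is therefore $\lambda_b(R_{p_ap_bp_ap_b}+R_{p_aq_bp_aq_b})$, and the right-hand side is $\frac{\sigma}{2}\lambda_b$.

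\emph{Second pairing:} $\xi=e_{q_a}\wedge e_{p_b}$, $\eta=e_{q_a}\wedge e_{q_b}$. The $\lambda_b$-part is $\lambda_b(R_{q_ap_bq_ap_b}+R_{q_aq_bq_aq_b})$, and the right-hand side is again $\frac{\sigma}{2}\lambda_b$.

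Subtracting the two identities cancels the $\sigma$-terms and leaves
\[
\lambda_bD_{ab}+\lambda_a\,(\text{cross terms})=0.
\]
The cross terms are components such as $R_{q_ap_bp_aq_b}$ and $R_{p_ap_bq_aq_b}$, which come from the $\lambda_a$-parts of $J\xi$ and $J\eta$.

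The main obstacle is showing that the $\lambda_a$ cross terms cancel between the two pairings, or are otherwise controlled. For this I would first use the pair symmetry $R_{ijkl}=R_{klij}$ and the first Bianchi identity on the indices $\{p_a,q_a,p_b,q_b\}$. I expect the combination to reduce to a multiple of $R_{p_aq_ap_bq_b}=B_{ab}$ appearing with opposite signs in the two pairings, so that it cancels.

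If a residual multiple of $\lambda_aB_{ab}$ survives, I would run the same argument with the roles of $a$ and $b$ interchanged. That produces a second linear relation, in which $\lambda_a$ multiplies the analogous difference $D_{ba}$ and $\lambda_b$ multiplies the residual cross term. I would then feed in the equality information already available: $\Q(\omega)=\frac{n-2}{2}\sigma\,\omega$ forces equality in Lemma~\ref{lem:curvature-lower-bound}, so $A_{ab}-2|B_{ab}|=\sigma$, and $B_{ab}=0$ whenever $|\lambda_a|\ne|\lambda_b|$. This reduces the remaining case to $|\lambda_a|=|\lambda_b|$. There I would compare the two relations directly, again using Bianchi, or pair \eqref{eq:semm1} once more on $(e_{p_a}\wedge e_{p_b},\,e_{q_a}\wedge e_{q_b})$ to get an independent equation. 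Since $\lambda_a,\lambda_b\neq0$ in the full-rank case, dividing by $\lambda_b$ at the end yields $D_{ab}=0$.
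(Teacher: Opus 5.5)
Your two pairings are the ones the paper uses: the $e_1\wedge e_3$-coefficient of \eqref{eq:semm1} on $e_1\wedge e_4$, and the $e_2\wedge e_3$-coefficient on $e_2\wedge e_4$. Swapping $\xi,\eta$ only flips an overall sign, because $J\circ\R+\R\circ J$ is skew-adjoint, and subtracting is exactly the paper's proof. The step you flag as the main obstacle closes immediately: both pairings contain the identical cross term $\lambda_a\bigl(R_{q_ap_bp_aq_b}-R_{p_ap_bq_aq_b}\bigr)$, which Bianchi shows is $-\lambda_aB_{ab}$ with the same sign in both (opposite signs would double it on subtraction, not cancel it), so subtraction gives $\lambda_bD_{ab}=0$ directly and your fallback via Lemma~\ref{lem:curvature-lower-bound} is unnecessary.
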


\begin{proof}
Fix $a\ne b$. For simplicity, we write
\[
        (e_1,e_2,e_3,e_4)=(e_{p_a},e_{q_a},e_{p_b},e_{q_b}),
        \qquad
        \lambda_1=\lambda_a,\quad \lambda_2=\lambda_b.
\]
Then $D_{ab}=(R_{1313}+R_{1414})-(R_{2323}+R_{2424})$.
The $e_1\wedge e_3$-coefficient of \eqref{eq:semm1} applied to $e_1\wedge e_4$ gives
\begin{equation}\label{eq:S1-full}
        -\lambda_1(R_{1324}-R_{2314})
        +\lambda_2(R_{1313}+R_{1414})
        =
        \frac{\sigma}{2}\lambda_2.
\end{equation}
Similarly, the $e_2\wedge e_3$-coefficient with $(X,Y)=(e_2,e_4)$ gives
\begin{equation}\label{eq:S3-full}
        -\lambda_1(R_{1324}-R_{2314})
        +\lambda_2(R_{2323}+R_{2424})
        =
        \frac{\sigma}{2}\lambda_2.
\end{equation}
Here we used the curvature symmetries, for instance
$R_{2413}=R_{1324}$, and $R_{1423}=R_{2314}$.

Subtracting
\eqref{eq:S3-full} from \eqref{eq:S1-full} gives
\[
        \lambda_2\bigl[(R_{1313}+R_{1414})-(R_{2323}+R_{2424})\bigr]=0.
\]
Since $\lambda_2\ne0$, we have $D_{ab}=0$.
\end{proof}

\begin{lemma}\label{lem:full-rank-ricci-commutes}
We have $[\Ric,J]=0$ at every $x\in\wt M_0$.
\end{lemma}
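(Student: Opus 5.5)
The plan is to reduce $[\Ric,J]=0$ to a polarization argument on each eigenspace of $J^2$, using Lemma~\ref{lem:full-rank-algebraic-reduction} and Lemma~\ref{lem:pairwise-diag-full}.

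\emph{Step 1: reduction to eigenspaces.} Since $2r=n$ on $\wt M_0$, all $\lambda_a$ are nonzero. Hence $T_x\wt M=\bigoplus_\mu E_\mu$, where $E_\mu$ is the $(-\mu^2)$-eigenspace of $J^2$ and is spanned by the pairs $\{e_{p_a},e_{q_a}\}$ with $|\lambda_a|=\mu>0$. On $E_\mu$ we can write $J=\mu K$, where $K$ is an orthogonal complex structure ($K^2=-1$, $K^T=-K$). By Lemma~\ref{lem:full-rank-algebraic-reduction}, $\Ric$ preserves each $E_\mu$. So it suffices to show that $S:=\Ric|_{E_\mu}$ commutes with $K$ for each $\mu$.

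\emph{Step 2: the diagonal identity.} Let $v\in E_\mu$ be any unit vector. I claim there is an adapted frame \eqref{eq:full-rank-normal-form} with $e_{p_a}=v$ and $e_{q_a}=Kv$ for some $a$.
\begin{itemize}
\item To build it, complete $v$ to a $K$-unitary basis $v=v_1,\dots,v_k$ of $E_\mu$, and use the pairs $(v_j,Kv_j)$.
\item On $E_\mu$ we have $\omega|_{E_\mu}=\mu\sum_j v_j^\flat\wedge(Kv_j)^\flat$, since $\ip{Kv_j}{Kv_j}=1$.
\item Keep the old pairs on the other eigenspaces. If some $\lambda_b$ is negative, swap $p_b,q_b$; this sign choice is harmless, since Lemma~\ref{lem:pairwise-diag-full} is stated for any skew-normal frame with nonzero $\lambda$'s.
\end{itemize}
In this frame,
\[
\Ric(v,v)-\Ric(Kv,Kv)=\sum_{b\ne a}D_{ab}+\bigl(R_{p_aq_ap_aq_a}-R_{q_ap_aq_ap_a}\bigr).
\]
The parenthesized term vanishes by curvature symmetry, and each $D_{ab}=0$ by Lemma~\ref{lem:pairwise-diag-full}. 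Hence $S(v,v)=S(Kv,Kv)$ for all $v\in E_\mu$, where I use $S$ also for the bilinear form $\Ric(\cdot,\cdot)$ restricted to $E_\mu$.

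\emph{Step 3: polarization.} Apply Step 2 to $v+w$ and use linearity of $K$. This gives $S(v,w)=S(Kv,Kw)$, i.e.\ $K^TSK=S$. Since $K^T=-K=K^{-1}$, this is $SK=KS$. Then $\Ric J=J\Ric$ on every $E_\mu$, hence $[\Ric,J]=0$ at $x$.

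The only delicate point I expect is the frame-independence in Step 2. Lemma~\ref{lem:pairwise-diag-full} must be applicable in an arbitrary adapted frame, not only the one fixed at the start. This holds because its proof uses only \eqref{eq:semm1}, which is frame-free, together with the nonvanishing of the $\lambda$'s. The freedom to choose the unitary basis inside $E_\mu$ is what turns the finitely many identities $D_{ab}=0$ into the identity $S(v,v)=S(Kv,Kv)$ for all $v$.
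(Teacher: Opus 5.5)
Your proposal is correct and follows the paper's proof essentially step for step. You reduce to the eigenspaces $E_\mu$ of $J^2$ via Lemma~\ref{lem:full-rank-algebraic-reduction}, build an adapted frame with $e_1=v$ and $e_2=\mu^{-1}Jv$, use $D_{ab}=0$ from Lemma~\ref{lem:pairwise-diag-full} to get $\Ric(v,v)=\Ric(\mu^{-1}Jv,\mu^{-1}Jv)$, and polarize. Your extra remarks (the explicit unitary-frame construction, the vanishing diagonal term, and the applicability of Lemma~\ref{lem:pairwise-diag-full} in any adapted frame) only spell out what the paper uses implicitly.
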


\begin{proof}
Fix $x\in\wt M_0$. By Lemma~\ref{lem:full-rank-algebraic-reduction}, $\Ric$ preserves every eigenspace of $J^2$. Let $-\mu^2$ be an eigenvalue of $J^2$, with $\mu>0$, and set
\[
        E_\mu:=\ker(J^2+\mu^2\id).
\]
In particular, we have $T_x\widetilde M=\bigoplus_{\mu>0}E_{\mu}$. It remains to prove that $\Ric$ commutes with $J$ on each fixed $E_\mu$. We set
\[
        I:=\mu^{-1}J
        \qquad\text{on }E_\mu.
\]
Then $I^2=-\id$ and $I$ is orthogonal. Let $v\in E_\mu$ be a unit vector.
Extend $v,Iv$ to an orthonormal basis adapted to $J$, with $e_1=v$ and $e_2=Iv$.

We then observe that
\begin{align*}
	\Ric(v,v)-\Ric(Iv,Iv)
        &=
        \sum_{k=1}^n(R_{1k1k}-R_{2k2k})\\
        &=
        \sum_{b=2}^r
        \Bigl[
        (R_{1p_b1p_b}+R_{1q_b1q_b})  
        -(R_{2p_b2p_b}+R_{2q_b2q_b})
        \Bigr]\\
        &= \sum_{b=2}^r D_{1b}.
\end{align*}
Therefore Lemma~\ref{lem:pairwise-diag-full} implies
\[
        \Ric(v,v)=\Ric(Iv,Iv)
\]
for every unit vector $v\in E_\mu$.
By homogeneity, this holds for every $v\in E_\mu$.
By polarization, we have $\Ric(u,v)=\Ric(Iu,Iv)$ for all $u,v\in E_\mu$.
Therefore,
\[
\begin{aligned}
        \ip{(\Ric\circ I-I\circ\Ric)u}{v}
        =
        \Ric(Iu,v)+\Ric(u,Iv)
        =0
\end{aligned}
\]
for all $u,v\in E_\mu$. Hence $[\Ric,J]=0$ on $E_\mu$.
\end{proof}

Recall that, in the terminology of \cite[Sec.~3.1]{Semmelmann2001}
(see also \cite{Semmelmann2003}), a one-form $\alpha$ is a special
Killing form with our normalization constant $c$ if
\[
        \nabla_X\alpha=\frac12\iota_X\dd\alpha,
        \qquad
        \nabla_X\dd\alpha=-2c\,X^\flat\wedge\alpha
\]
for every vector field $X$. By \eqref{eq:equality-basic},
$\dd\alpha=\omega$ and the second identity already holds with
$c=\sigma/4$.

\begin{proposition}\label{prop:full-rank-special}
On every connected component $U\subset\wt M_0$,
\[
        \nabla_X\alpha=\frac12\iota_X\omega
\]
for every vector field $X$ on $U$.
\end{proposition}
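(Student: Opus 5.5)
The proof will be a direct combination of Lemma~\ref{lem:full-rank-ricci-commutes} with the identity \eqref{eq:semm2} of Lemma~\ref{lem:semmelmann-equality-identities}. We are in the full-rank case $2r=n$. On a connected component $U\subset\wt M_0$, the form $\omega$ is nondegenerate, so the construction of Lemma~\ref{lem:full-rank-ricci-commutes} applies at every point $x\in U$. That lemma gives
\[
\Ric\circ J-J\circ\Ric=0\qquad\text{at }x .
\]

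Now evaluate \eqref{eq:semm2} at $x$ on an arbitrary tangent vector $X$. The left-hand side vanishes, so
\[
-(n-2)\sigma\Bigl(\nabla_X\alpha-\tfrac12\iota_X\omega\Bigr)=0 .
\]
Since $n\ge4$ and $\sigma>0$, the prefactor $(n-2)\sigma$ is nonzero. Hence $\nabla_X\alpha=\frac12\iota_X\omega$ at $x$. Because $x\in U$ was arbitrary and the identity is tensorial in $X$, it holds for every vector field $X$ on $U$.

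The substantive input has already been supplied by the preceding lemmas. Lemma~\ref{lem:full-rank-algebraic-reduction} shows that $\Ric$ preserves the eigenspaces $E_\mu$ of $J^2$. Lemma~\ref{lem:pairwise-diag-full} then gives $\Ric(v,v)=\Ric(Iv,Iv)$ on each $E_\mu$.

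The only points to check are bookkeeping. First, in the definition of $E_\mu$ all eigenvalues of $J^2$ must be strictly negative, which is exactly full rank, so that $T_x\wt M=\bigoplus_{\mu>0}E_\mu$. Second, in the computation $\Ric(v,v)-\Ric(Iv,Iv)=\sum_b D_{1b}$, the adapted frame must be chosen so that $e_1=v$ and $e_2=Iv$ with $Je_1=\mu e_2$, and the remaining pairs $(p_b,q_b)$ must be $J$-invariant planes. This choice is always possible: take an orthonormal $I$-adapted basis of $E_\mu$ starting with $v$, and adapted bases on the other eigenspaces.

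With these checks, the main (already overcome) obstacle is the commutation $[\Ric,J]=0$. The proposition itself then follows immediately from \eqref{eq:semm2}. Note also the consequence used afterwards: on $U$ the one-form $\alpha$ is a special Killing form with constant $c=\sigma/4$, since $\nabla_X\dd\alpha=-\frac{\sigma}{2}X^\flat\wedge\alpha$ holds by \eqref{eq:equality-basic}.
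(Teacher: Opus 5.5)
Your proposal is correct and follows the paper's proof exactly: you take $[\Ric,J]=0$ on $U$ from Lemma~\ref{lem:full-rank-ricci-commutes}, substitute it into \eqref{eq:semm2}, and divide by the nonzero constant $(n-2)\sigma$. Your bookkeeping remarks on the adapted frame and the eigenspaces $E_\mu$ correctly re-check that earlier lemma, but this step does not need them.
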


\begin{proof}
By Lemma~\ref{lem:full-rank-ricci-commutes}, $[\Ric,J]=0$ on $U$. Then \eqref{eq:semm2} gives
\[
        0=[\Ric,J]X
        =-(n-2)\sigma
        \left(\nabla_X\alpha-\frac12\iota_X\omega\right).
\]
\end{proof}

\subsection{Local splitting on the maximal-rank set}

We next assume $2r<n$ and work on a connected component $U$ of $\wt M_0$. Throughout this subsection, we set
\begin{equation}\label{eq:Theta-u}
        \Theta:=\omega^r,
        \qquad
        u:=\norm{\Theta}_g>0,
        \qquad
        K:=\ker\omega,
        \qquad
        E:=K^\perp .
\end{equation}
Since $\omega$ has constant rank $2r$ 
on $U$, both $K$ and $E$ are smooth vector bundles over $U$, with $\dim E=2r$ and $\dim K=n-2r$.

\begin{lemma}\label{lem:alpha-kernel-general}
We have
\begin{equation}\label{eq:alpha-wedge-omegar}
        \alpha\wedge\omega^r=0
\end{equation}
on $U$. Equivalently, $\alpha|_K=0$.
\end{lemma}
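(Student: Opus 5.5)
The plan is to differentiate the identity $\omega^{r+1}=0$, which holds on all of $\wt M$ because $2r$ is the maximal rank of $\omega$. Using the equality equation $\nabla_X\omega=-\frac{\sigma}{2}X^\flat\wedge\alpha$ from Proposition~\ref{prop:equality-basic} and the Leibniz rule (two-forms commute under the wedge product), we get
\[
0=\nabla_X(\omega^{r+1})=(r+1)\,\omega^r\wedge\nabla_X\omega
=-\frac{(r+1)\sigma}{2}\,\omega^r\wedge X^\flat\wedge\alpha .
\]
Hence, at every point of $U$, the $(2r+1)$-form $\beta:=\alpha\wedge\omega^r$ satisfies $X^\flat\wedge\beta=0$ for every tangent vector $X$.

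Next, we use the degree count. Since $n$ is even and $2r<n$, we have $2r\le n-2$, so $\deg\beta=2r+1\le n-1$. For a form of degree $k<n$ that is nonzero, write it in a basis $e^{I}$. Pick a multi-index $I$ with nonzero coefficient and an index $j\notin I$, which exists since $k<n$. Then $e^j\wedge\beta$ has a nonzero $e^{j}\wedge e^{I}$ coefficient, because no other term of $\beta$ can produce that multi-index. Therefore $X^\flat\wedge\beta=0$ for all $X$ forces $\beta=0$, which proves \eqref{eq:alpha-wedge-omegar} on $U$.

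For the equivalence, at $x\in U$ put $\omega$ in the normal form $\sum_{a=1}^r\lambda_ae^{p_a}\wedge e^{q_a}$ with all $\lambda_a\ne0$. Then $E^*$ is spanned by $e^1,\dots,e^{2r}$, and $K$ is spanned by $e_{2r+1},\dots,e_n$. We have $\omega^r=r!\,\lambda_1\cdots\lambda_r\,e^1\wedge\cdots\wedge e^{2r}$, a nonzero decomposable form. Writing $\alpha=\alpha_E+\alpha_K$, the component $\alpha_E$ wedges to zero against $\omega^r$. The component $\alpha_K=\sum_{j>2r}\alpha_je^j$ gives $\sum_{j>2r}\alpha_j\,e^j\wedge e^1\wedge\cdots\wedge e^{2r}$, whose terms are linearly independent. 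So $\alpha\wedge\omega^r=0$ if and only if $\alpha_K=0$, that is, $\alpha|_K=0$.

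No step is a real obstacle. The only point needing care is the degree bound $2r+1<n$, which uses the evenness of $n$ together with $2r<n$; in the full-rank case the argument would fail, and that case is excluded here.
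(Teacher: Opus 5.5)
Your proposal is correct and follows the paper's own proof. You differentiate $\omega^{r+1}\equiv0$ using $\nabla_X\omega=-\frac{\sigma}{2}X^\flat\wedge\alpha$, and then use that $\alpha\wedge\omega^r$ has degree $2r+1<n$, so $X^\flat\wedge(\alpha\wedge\omega^r)=0$ for all $X$ forces it to vanish. You also spell out two steps the paper leaves implicit: the linear-algebra fact, and the equivalence with $\alpha|_K=0$ via the skew-normal form.
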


\begin{proof}
Since $2r$ is the maximal rank of $\omega$, we have $\omega^{r+1}\equiv0$ on $\wt M$. Differentiating and using \eqref{eq:equality-basic},
\[
\begin{aligned}
0
=
\nabla_X(\omega^{r+1})  
=
-\frac{(r+1)\sigma}{2}X^\flat\wedge\alpha\wedge\omega^r .
\end{aligned}
\]
Thus $X^\flat\wedge(\alpha\wedge\omega^r)=0$ for every vector field $X$. Since $n$ is even and $2r<n$, the form $\alpha\wedge\omega^r$ has degree $2r+1<n$. Hence $\alpha\wedge\omega^r=0$.
\end{proof}

Define $Z\in\Gamma(E)$ by
\begin{equation}\label{eq:Z-def}
        \iota_Z\omega=\alpha .
\end{equation}
Since $E=K^\perp$ and $K=\ker\omega$, the restriction $\omega|_E$ is nondegenerate. Lemma~\ref{lem:alpha-kernel-general} shows that $\alpha$ annihilates $K$, so this equation determines a unique smooth $Z$.

\begin{lemma}\label{lem:Theta-u-identities}
The following identities hold on $U$:
\begin{align}
	    &\nabla_X\Theta
        =
        -\frac{\sigma}{2}X^\flat\wedge\iota_Z\Theta,	
        \label{eq:nablaTheta-general}\\
        & \dd\log u
        =
        -\frac{\sigma}{2}Z^\flat. \label{eq:dlogu-general}     
\end{align}
In particular, $\dd u$ vanishes on $K$.
\end{lemma}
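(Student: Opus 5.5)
The first identity will follow from the Leibniz rule applied to $\Theta=\omega^r$, together with Lemma~\ref{lem:alpha-kernel-general}. The second will then follow by differentiating $u^2=\norm{\Theta}^2$ and computing one pointwise inner product.

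\emph{Step 1: the derivative of $\Theta$.} By the equality equation $\nabla_X\omega=-\frac{\sigma}{2}X^\flat\wedge\alpha$, since two-forms commute,
\[
\nabla_X\Theta=r\,\omega^{r-1}\wedge\nabla_X\omega=-\frac{r\sigma}{2}X^\flat\wedge\alpha\wedge\omega^{r-1}.
\]
It then suffices to check that $\iota_Z\Theta=r\,\alpha\wedge\omega^{r-1}$. Contraction is a derivation and $\iota_Z\omega=\alpha$, so
\[
\iota_Z(\omega^r)=r\,(\iota_Z\omega)\wedge\omega^{r-1}=r\,\alpha\wedge\omega^{r-1}.
\]
This gives \eqref{eq:nablaTheta-general}.

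\emph{Step 2: the derivative of $u$.} We have
\[
X(u^2)=2\ip{\nabla_X\Theta}{\Theta}=-\sigma\ip{X^\flat\wedge\iota_Z\Theta}{\Theta}=-\sigma\ip{\iota_Z\Theta}{\iota_X\Theta}.
\]
So the key point is the pointwise identity
\[
\ip{\iota_Z\Theta}{\iota_X\Theta}=\ip{Z}{X}\,\norm{\Theta}^2
\]
for $Z\in E$. At a point, $\Theta$ is a nonzero multiple $c\,e^1\wedge\cdots\wedge e^{2r}$ of the volume form of $E$, with $E=\operatorname{span}(e_1,\dots,e_{2r})$ in an adapted frame. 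Then $\iota_X\Theta=\iota_{X^E}\Theta$, where $X^E$ is the orthogonal projection of $X$ onto $E$. For decomposable top forms on $E$, the pairing $(X,Y)\mapsto\ip{\iota_X\Theta}{\iota_Y\Theta}$ equals $c^2\ip{X^E}{Y^E}$; this is checked directly on basis vectors. Since $Z\in E$, we have $\ip{Z}{X^E}=\ip{Z}{X}$.

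Substituting the pointwise identity gives $2u\,du(X)=-\sigma u^2\ip{Z}{X}$, hence $\dd\log u=-\frac{\sigma}{2}Z^\flat$, which is \eqref{eq:dlogu-general}. Because $Z\in\Gamma(E)=\Gamma(K^\perp)$, the one-form $Z^\flat$ vanishes on $K$, and therefore so does $\dd u$.

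\emph{Main obstacle.} The only place needing care is the pointwise inner-product identity, in particular keeping the normalization of the norm on forms consistent. With the convention used in Lemma~\ref{lem:twistor-identity}, $\norm{e^{i_1}\wedge\cdots\wedge e^{i_k}}=1$, and then $\ip{X^\flat\wedge\beta}{\gamma}=\ip{\beta}{\iota_X\gamma}$ holds with no extra constants. The identity also needs $\Theta\ne0$, which holds on $U$ because $\omega$ has rank exactly $2r$ there.
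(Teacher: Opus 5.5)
Your proposal is correct and follows essentially the same route as the paper. You obtain the first identity from the Leibniz rule together with $\iota_Z\omega^r=r\,\alpha\wedge\omega^{r-1}$, and the second by differentiating $u^2=\norm{\Theta}^2$ and using that $\Theta$ is top-degree on $E$. The only difference is that you verify the pointwise identity $\ip{X^\flat\wedge\iota_Z\Theta}{\Theta}=\ip{X}{Z}u^2$ through adjointness and an adapted frame, whereas the paper simply asserts it.
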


\begin{proof}
Since $\Theta=\omega^r$,
\[
\begin{aligned}
\nabla_X\Theta
=
-\frac{r\sigma}{2}X^\flat\wedge\alpha\wedge\omega^{r-1}.
\end{aligned}
\]
On the other hand, by \eqref{eq:Z-def},
\[
        \iota_Z\Theta
        =
        \iota_Z(\omega^r)
        =
        r(\iota_Z\omega)\wedge\omega^{r-1}
        =
        r\alpha\wedge\omega^{r-1}.
\]
This proves \eqref{eq:nablaTheta-general}.

Since $\Theta$ is a top-degree form on $E$, we have
\[
        \ip{X^\flat\wedge\iota_Z\Theta}{\Theta}
        =
        \ip{X}{Z}u^2 .
\]
Therefore
\[
\begin{aligned}
X(u^2)
&=
2\ip{\nabla_X\Theta}{\Theta}  \\
&=
-\sigma\ip{X^\flat\wedge\iota_Z\Theta}{\Theta}  \\
&=
-\sigma\ip{X}{Z}u^2 .
\end{aligned}
\]
Dividing by $2u^2$ gives
\[
        \dd\log u=-\frac{\sigma}{2}Z^\flat.
\]
Since $Z\in E$, this one-form vanishes on $K$.
\end{proof}

\begin{proposition}\label{prop:local-splitting}
If $2r<n$, every point of $\wt M_0$ has a neighborhood on which
$u^{-2}g$ is a product $g_B+g_F$, with $TB=E$ and $TF=K$.
The function $u$ depends only on $B$, and
\begin{equation}\label{eq:original-warped-product}
g=h+u^2g_F,\qquad h=u^2g_B.
\end{equation}
\end{proposition}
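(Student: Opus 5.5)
The plan is to show that the distribution $K=\ker\omega$ is parallel for the Levi-Civita connection $\hat\nabla$ of the conformal metric $\hat g:=u^{-2}g$ on $U$. Then $E=K^\perp$ is also $\hat\nabla$-parallel, since orthogonal complements do not change under a conformal change. The local de Rham decomposition theorem then gives, near each point of $U$, a product $\hat g=g_B+g_F$ with $TB=E$ and $TF=K$.

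\emph{Step 1: the $E$-component of $\nabla_XY$ for $Y\in\Gamma(K)$.} Differentiate the identity $\omega(Y,W)=0$, valid for all $W$. This gives $(\nabla_X\omega)(Y,W)+\omega(\nabla_XY,W)=0$. Insert the equality equation $\nabla_X\omega=-\frac{\sigma}{2}X^\flat\wedge\alpha$ from \eqref{eq:equality-basic}, and use $\alpha(Y)=0$ (Lemma~\ref{lem:alpha-kernel-general}). The result is
\[
\omega(\nabla_XY,W)=\frac{\sigma}{2}\ip{X}{Y}\alpha(W)=\frac{\sigma}{2}\ip{X}{Y}\,\omega(Z,W)
\]
for all $W$, by \eqref{eq:Z-def}. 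Since $\omega|_E$ is nondegenerate and $Z\in E$, this yields $(\nabla_XY)^E=\frac{\sigma}{2}\ip{X}{Y}Z$.

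\emph{Step 2: conformal correction.} Write $\hat g=e^{2f}g$ with $f=-\log u$. The standard formula is
\[
\hat\nabla_XY=\nabla_XY+X(f)Y+Y(f)X-\ip{X}{Y}\nabla f .
\]
By \eqref{eq:dlogu-general}, $\nabla f=\frac{\sigma}{2}Z$, and $Y(f)=0$ for $Y\in K$. Take the $E$-component, noting that $X(f)Y\in K$. The two $\frac{\sigma}{2}\ip{X}{Y}Z$ terms cancel, so $\hat\nabla_XY\in\Gamma(K)$. Hence $K$, and therefore $E$, is $\hat\nabla$-parallel. Both bundles are smooth because the rank of $\omega$ is constant on $U$. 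Parallel complementary orthogonal distributions are integrable with totally geodesic leaves, so the local de Rham theorem gives the product $u^{-2}g=g_B+g_F$ on a neighborhood of any point.

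\emph{Step 3: $u$ depends only on $B$.} By Lemma~\ref{lem:Theta-u-identities}, $\dd u$ vanishes on $K=TF$. In product coordinates $(b,y)$, $u$ is therefore locally independent of $y$, provided the factor $F$ is chosen connected, for instance as a small ball. Multiplying by $u^2$ gives $g=u^2g_B+u^2g_F=h+u^2g_F$ with $h=u^2g_B$ a metric on $B$, which is \eqref{eq:original-warped-product}.

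The only delicate point is the cancellation in Step 2. It relies on two facts: $\alpha$ vanishes on $K$, and $\nabla\log u$ equals exactly $-\frac{\sigma}{2}Z$ with the same $Z$ that appears in Step 1. Both are supplied by Lemmas~\ref{lem:alpha-kernel-general} and~\ref{lem:Theta-u-identities}, so I expect the rest to be routine bookkeeping of signs in the conformal change formula.
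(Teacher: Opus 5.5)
Your proof is correct and follows the same strategy as the paper: pass to $u^{-2}g$, show that $K=\ker\omega$ is parallel, apply the local de Rham theorem, and use $du|_K=0$. The only difference is how you verify parallelism. The paper shows that the unit-normalized form $\bar\Theta=u^{-(2r+1)}\omega^r$ is $u^{-2}g$-parallel, using the conformal change formula for $p$-forms, and reads off $K$ as its kernel; you instead differentiate $\iota_Y\omega=0$ directly and cancel the resulting $E$-component $\frac{\sigma}{2}\langle X,Y\rangle Z$ against the connection correction $-\langle X,Y\rangle\nabla f$, a slightly more economical computation.
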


\begin{proof}
On $U$, we define
\begin{equation}\label{eq:tilde-metric-theta}
        \bar g:=u^{-2}g,
        \qquad
        \bar\Theta:=u^{-(2r+1)}\Theta.
\end{equation}
For any $p$-form $\beta$ and $\bar g=e^{2\varphi}g$, one has the identity
\[
        \bar\nabla_X\beta
        =
        \nabla_X\beta
        -p\dd\varphi(X)\beta
        -\dd\varphi\wedge\iota_X\beta
        +X^\flat\wedge\iota_{\nabla\varphi}\beta .
\]
Using \eqref{eq:nablaTheta-general} and \eqref{eq:dlogu-general}, applying the above identity to the $2r$-form $\bar\Theta=u^{-(2r+1)}\Theta$ with $\varphi=-\log u$, we thus get
\begin{align*}
\bar\nabla_X\bar\Theta
&=
u^{-(2r+1)}
\Bigl[
        \nabla_X\Theta
        -\dd\log u(X)\Theta
        +\dd\log u\wedge\iota_X\Theta
        +X^\flat\wedge\iota_{\nabla\varphi}\Theta
\Bigr]\\
&= 
u^{-(2r+1)}
\Bigl[
-\frac{\sigma}{2}X^\flat\wedge\iota_Z\Theta
+\frac{\sigma}{2}\ip{X}{Z}\Theta
-\frac{\sigma}{2}Z^\flat\wedge\iota_X\Theta
+\frac{\sigma}{2}X^\flat\wedge\iota_Z\Theta 
\Bigr]\\
&=
\frac{\sigma}{2} u^{-(2r+1)}
\bigl(\ip{X}{Z}\Theta-Z^\flat\wedge\iota_X\Theta\bigr).
\end{align*}
For a vector field $X$ on $U$, we can write $X=X_E+X_K$ with $X_E\in E$ and $X_K\in K$, then
\[
        \iota_X\Theta=\iota_{X_E}\Theta,
        \qquad
        Z^\flat\wedge\iota_X\Theta=\ip{Z}{X_E}\Theta=\ip{Z}{X}\Theta.
\]
Hence $\bar\nabla_X\bar\Theta=0$ for every $X$.

The kernel of the parallel form $\bar\Theta$ is $K$. Differentiating
$\iota_V\bar\Theta=0$ for $V\in K$ shows that $K$ is parallel for
$u^{-2}g$, and so is $E=K^\perp$. The local de Rham decomposition
gives the product. Since $du$ vanishes on $K$ by
Lemma~\ref{lem:Theta-u-identities}, $u$ depends only on the $B$-variable.
\end{proof}

\subsection{Intermediate rank}

For $2\le2r<n$, work in a local product neighborhood from
Proposition~\ref{prop:local-splitting}, with $g=h+u^2g_F$.
We first derive the trace equation, which also applies when $r=1$.
The rigidity argument then uses $r>1$.

We call vectors tangent to $B$ horizontal and vectors tangent to $F$
vertical. For arbitrary horizontal vectors $X,Y$ and vertical vectors
$V,W$, the warped-product curvature formulas give
\begin{align}
        R_g\bigl(X,u^{-1}V,Y,u^{-1}W\bigr)
        &=-\frac{\Hess_hu(X,Y)}{u}\,
        \ip{V}{W}_{g_F}, \label{eq:wp-mixed}\\
        R_g(X,Y,u^{-1}V,u^{-1}W)
        &=0. \label{eq:wp-HHVV}
\end{align}
In particular, taking $Y=X$ and $W=V$ with $|V|_{g_F}=1$ in
\eqref{eq:wp-mixed} gives
\begin{equation}\label{eq:wp-HV}
        R_g(X,u^{-1}V,X,u^{-1}V)
        =
        -\frac{\Hess_hu(X,X)}{u},
\end{equation}
while \eqref{eq:wp-mixed} vanishes whenever $V$ and $W$ are
$g_F$-orthogonal.

\begin{proposition}\label{prop:trace-equation}
On $(B^{2r},h)$, the function $u$ satisfies
\begin{equation}\label{eq:trace-equation-intermediate}
        -\Delta_hu=\frac{r\sigma}{2}u.
\end{equation}
\end{proposition}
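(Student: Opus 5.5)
The plan is to derive \eqref{eq:trace-equation-intermediate} from the two first-order equations $\dd\alpha=\omega$ and $\dd\omega=0$, using the Lie derivative along the vector field $Z$ of \eqref{eq:Z-def}. The point is that $Z$ is, up to a constant, the $h$-gradient of $\log u$, while $\Theta=\omega^r$ is, up to sign, $u$ times the volume form of $(B,h)$. The identity $\mathcal L_Z\Theta=r\Theta$ then becomes a second-order equation for $u$ on $B$.

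First, I record the setup in the local product neighborhood of Proposition~\ref{prop:local-splitting}. There $\omega$ annihilates $K=TF$, so $\omega$ and $\Theta$ are horizontal forms. Since $\Theta$ has top degree $2r$ on $E=TB$ and $|\Theta|_g=u$, after fixing a local orientation of $B$ we have
\[
\Theta=\pm\,u\,d\vol_h ,
\]
with $d\vol_h$ pulled back to the product. Moreover $u$ depends only on $B$, and \eqref{eq:dlogu-general} gives $Z=-\frac{2}{\sigma}\nabla_h\log u$. Hence $Z$ is horizontal and is the lift of a vector field on $B$; in particular its $h$-divergence $\operatorname{div}_hZ$ makes sense. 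I will check that the normalization $|\Theta|_g=|\Theta|_h$ holds, which it does because $g$ restricted to $E$ equals $h$.

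Second, I use Cartan's formula. Since $\dd\omega=0$ and $\iota_Z\omega=\alpha$, we get $\mathcal L_Z\omega=\dd\iota_Z\omega=\dd\alpha=\omega$. The Lie derivative is a derivation, so
\[
\mathcal L_Z\Theta=r\,\omega^{r-1}\wedge\mathcal L_Z\omega=r\Theta .
\]
On the other hand, $\mathcal L_Z(u\,d\vol_h)=\bigl(Zu+u\operatorname{div}_hZ\bigr)d\vol_h$. Here I need that the flow of the horizontal, $F$-independent field $Z$ preserves the product structure, so that $\mathcal L_Z d\vol_h$ computed on the product agrees with the computation on $B$. Comparing the two expressions gives
\[
Zu+u\operatorname{div}_hZ=ru .
\]

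Third, I substitute $Z=-\frac{2}{\sigma}u^{-1}\nabla_hu$. This gives $Zu=-\frac{2}{\sigma}u^{-1}|\nabla_hu|^2$ and $\operatorname{div}_hZ=-\frac{2}{\sigma}\bigl(u^{-1}\Delta_hu-u^{-2}|\nabla_hu|^2\bigr)$. The gradient terms cancel, leaving $-\frac{2}{\sigma}\Delta_hu=ru$, which is \eqref{eq:trace-equation-intermediate}. Nothing in this argument uses $r>1$, consistent with the remark that the trace equation also holds for $r=1$.

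The step I expect to need the most care is the bookkeeping in the second step: confirming that $\Theta$ is exactly $\pm u\,d\vol_h$, and not a multiple involving the fiber, and that the Lie derivative on the product reduces to the one on $B$. As a fallback, I would derive the same identity from $\dstar\alpha=0$. Writing $\alpha=\iota_Z\omega$, the warped-product formula $\dstar_g\alpha=\dstar_h\alpha-(n-2r)\,\alpha(\nabla\log u)$ together with $\alpha(Z)=\omega(Z,Z)=0$ reduces this to $\dstar_h\alpha=0$ on $B$. Combining that with $\nabla^h_X\omega=-\frac{\sigma}{2}X^\flat\wedge\alpha$ should give the same trace.
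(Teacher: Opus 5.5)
Your argument is correct, and it takes a genuinely different route from the paper's.

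\textbf{The paper's route.} The paper proves the trace equation pointwise through curvature. For each $\omega$-plane $(e_{p_a},e_{q_a})$ with $\lambda_a\ne0$, it chooses a $g$-orthonormal vertical pair and applies the equality case (i) of Lemma~\ref{lem:curvature-lower-bound}. This is available because $\Q(\omega)=\frac{n-2}{2}\sigma\,\omega$. The $B_{ab}$ term vanishes by \eqref{eq:wp-HHVV}, and \eqref{eq:wp-mixed} turns the four mixed sectional curvatures into $-\Hess_hu/u$. The result is the finer plane-by-plane identity $\Hess_hu(e_{p_a},e_{p_a})+\Hess_hu(e_{q_a},e_{q_a})=-\frac{\sigma}{2}u$, and summing over $a$ gives \eqref{eq:trace-equation-intermediate}.

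\textbf{Your route.} You stay at first order. The identity $\mathcal L_Z\Theta=\dd(\iota_Z\Theta)=\dd(r\,\alpha\wedge\omega^{r-1})=r\Theta$ uses only $\dd\alpha=\omega$ and $\dd\omega=0$. With $\Theta=\pm u\,d\vol_h$, it reads $\operatorname{div}_h(uZ)=ru$. Since $uZ=-\frac{2}{\sigma}\nabla_hu$ by \eqref{eq:dlogu-general}, this is already the trace equation. You could therefore skip expanding $Zu+u\operatorname{div}_hZ$ and the cancellation of gradient terms.

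The bookkeeping points you flag all check out:
\begin{itemize}
\item $\Theta$ annihilates $K=TF$ and has degree $\dim B$, so it is pointwise a multiple of $\pi_B^*d\vol_h$.
\item $g|_E=h$ and $E\perp K$ give $|\pi_B^*d\vol_h|_g=1$, so the multiple is $\pm u$ with locally constant sign.
\item $Z$ is the lift of an $F$-independent field on $B$, so its flow is $\phi_t\times\id_F$ and the Lie derivative reduces to $B$.
\end{itemize}

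\textbf{Comparison.} Your route needs no curvature formulas, no equality case of the algebraic lemma, and no vertical pair. It also explains the trace equation as $\dd\alpha=\omega$ read on the top-degree form of $B$. This parallels Remark~\ref{rem:rank-two-equations}, where $\dd(*_h\dd u)=(\Delta_hu)\,d\vol_h$ does the same job. The paper's route gives the stronger per-plane Hessian identity. It also stays inside the mechanism (PIC frames with a vertical pair together with \eqref{eq:wp-mixed}) that Proposition~\ref{prop:pure-hessian} uses next.

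\textbf{Caution on your fallback.} The fallback would not work on its own, because $\dstar_h\alpha=0$ holds identically in this setting. Writing $\alpha^\sharp=JZ$, three facts force $\operatorname{div}_h(JZ)=0$ automatically:
\begin{itemize}
\item $\nabla^hZ$ is symmetric, since $Z$ is a gradient;
\item $\alpha(Z)=\omega(Z,Z)=0$;
\item $\nabla_X\omega=-\frac{\sigma}{2}X^\flat\wedge\alpha$.
\end{itemize}
So that route carries no information about $\Delta_hu$. The real content is $\dd\alpha=\omega$, which your main argument uses.
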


\begin{proof}
Fix a point of the product neighborhood. Since $g|_{TB}=h$, choose an
$h$-orthonormal frame such that
\[
        \omega
        =
        \sum_{a=1}^{r}\lambda_a e^{p_a}\wedge e^{q_a},
        \qquad
        \lambda_a\ne0,\ p_a=2a-1,\ q_a=2a.
\]
Let $a\in\{1,\ldots,r\}$ be fixed. Since $n-2r\ge2$, choose any $g_F$-orthonormal vertical pair
$V,W$. Then $u^{-1}V,\, u^{-1}W\in K=\ker\omega$ are $g$-orthonormal vertical vectors. 
Complete the frame to a skew-normal frame for $\omega$ by including this vertical pair.
In this adapted frame, the coefficient of $\omega$ on the plane spanned by
$u^{-1}V$ and $u^{-1}W$ is zero.

We then apply the equality case in
Lemma~\ref{lem:curvature-lower-bound} to the pair
$e_{p_a},e_{q_a}$ with $\lambda_a\neq 0$, and the vertical pair $u^{-1}V,u^{-1}W$. Since
\[
        R_g(e_{p_a},e_{q_a},u^{-1}V,u^{-1}W)=0
\]
by \eqref{eq:wp-HHVV}, this gives
\[
\begin{aligned}
        \sigma
        ={}&
        R_g(e_{p_a},u^{-1}V,e_{p_a},u^{-1}V)
        +
        R_g(e_{q_a},u^{-1}V,e_{q_a},u^{-1}V)\\
        &+
        R_g(e_{p_a},u^{-1}W,e_{p_a},u^{-1}W)
        +
        R_g(e_{q_a},u^{-1}W,e_{q_a},u^{-1}W).
\end{aligned}
\]
Using the mixed warped-product curvature formula \eqref{eq:wp-HV} for each
term, we obtain
\[
        \Hess_hu(e_{p_a},e_{p_a})
        +
        \Hess_hu(e_{q_a},e_{q_a})
        =
        -\frac{\sigma}{2}u
\]
for each $a=1,\ldots,r$. Summing over $a=1,\ldots,r$ gives \eqref{eq:trace-equation-intermediate}.
\end{proof}

\begin{proposition}\label{prop:pure-hessian}
Suppose that $2< 2r<n$. On $(B^{2r},h)$, the function $u$ satisfies
\begin{equation}\label{eq:pure-hessian}
        \Hess_hu=-\frac{\sigma}{4}u\,h.
\end{equation}
\end{proposition}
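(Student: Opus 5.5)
Proposition~\ref{prop:trace-equation} already gives the trace, $\Delta_hu=-\frac{r\sigma}{2}u$. Since $2r\ge4$, it therefore suffices to show two things: the Hessian is diagonal in every skew-normal frame of $\omega|_E$, and its diagonal entries are all equal. Then $\Hess_hu=\frac{\Delta_hu}{2r}h=-\frac{\sigma}{4}u\,h$. The plan is to extract both facts from the curvature identities of Lemma~\ref{lem:semmelmann-equality-identities}, evaluated on mixed horizontal--vertical bivectors. By \eqref{eq:wp-mixed} and \eqref{eq:wp-HHVV}, the mixed curvature components are all expressed through $\Hess_hu$.

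\emph{Step 1: $\Hess_hu$ commutes with $J$.} Fix a point, an $h$-orthonormal skew-normal frame $e_1,\dots,e_{2r}$ of $\omega$ on $E$, and a $g$-unit vertical vector $N=u^{-1}V\in K$. Apply \eqref{eq:semm1} to $X\wedge N$ with $X$ horizontal. Since $JN=0$, we get $J(X\wedge N)=JX\wedge N$. The component of $\R(X\wedge N)$ along $Y\wedge N$ is $R(X,N,Y,N)=-u^{-1}\Hess_hu(X,Y)$, while its components along horizontal--horizontal and vertical--vertical bivectors vanish by \eqref{eq:wp-HHVV}. The components along $Y\wedge N'$ with $N'\perp N$ vanish by \eqref{eq:wp-mixed}. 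Writing $H:=-u^{-1}\Hess_hu$ as an endomorphism of $E$, the $E\otimes N$ block of \eqref{eq:semm1} should read
\[
HJ+JH=\frac{\sigma}{2}J\quad\text{on }E,
\]
up to a sign I would check carefully against the derivation convention \eqref{eq:derivation-commutator}. Since $J$ is invertible on $E$, this gives $J^{-1}HJ=\frac{\sigma}{2}-H$. Hence $H$ and $\frac{\sigma}{2}-H$ are conjugate, and so $\operatorname{tr}H=r\sigma/2$; this is consistent with Proposition~\ref{prop:trace-equation} and serves as a check on the sign.

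\emph{Step 2: the diagonal entries are equal.} This does not follow from Step 1 alone. I would combine Step 1 with the equality conditions of Lemma~\ref{lem:curvature-lower-bound}. Condition (i) applied to a pair $(e_{p_a},e_{q_a})$ and a vertical pair gives $H_{p_ap_a}+H_{q_aq_a}=\sigma/2$, as in the proof of Proposition~\ref{prop:trace-equation}. From Step 1, comparing $H e_{p_a}$ with $J^{-1}(\frac{\sigma}{2}-H)Je_{p_a}$ gives $H_{p_ap_a}=\frac{\sigma}{2}-H_{q_aq_a}$ again, so this adds nothing new. The extra freedom is that the skew-normal frame is not unique: one may rotate within each plane $\operatorname{span}(e_{p_a},e_{q_a})$, and, where $|\lambda_a|=|\lambda_b|$, mix the two planes. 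Applying condition (i) to rotated frames (for instance $e'_{p_a}=\cos t\,e_{p_a}+\sin t\,e_{q_a}$) does not change the trace of each block, however.

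So I expect the real input to be \eqref{eq:semm2} or \eqref{eq:semm0} on the full space. Using $r>1$, I would compute $\Ric$ of the warped product explicitly. Its vertical part is $\Ric(N,N)=u^{-1}(-\Delta_hu)-(n-2r-1)u^{-2}(|du|^2_h-1)$, and its horizontal part is $\Ric_h-(n-2r)u^{-1}\Hess_hu$. I would then feed this into \eqref{eq:semm0} on mixed pairs of horizontal vectors $X\in\operatorname{span}(e_{p_a},e_{q_a})$, $Y\in\operatorname{span}(e_{p_b},e_{q_b})$ with $a\ne b$, which is exactly where $r>1$ enters. This should force the $2\times2$ block traces to agree, i.e.\ $H_{p_ap_a}+\cdots$ to be balanced across blocks, and force each block to be a multiple of the identity.

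\emph{Main obstacle.} Step 2 is the main obstacle: getting $\Hess_hu$ to be a multiple of $h$ rather than merely satisfying $HJ+JH=\frac{\sigma}{2}J$. My first attempt would be the observation that $H-\frac{\sigma}{4}$ anticommutes with $J$. An anticommuting symmetric endomorphism maps each $J^2$-eigenspace to itself and is $J$-antilinear there. I would then show, using Lemma~\ref{lem:pairwise-diag-full}-type subtractions (the quantity $D_{ab}$ with one index pair vertical), that its diagonal entries vanish. By the rotation freedom within each plane, this forces the restriction to each plane to vanish. The off-diagonal blocks between different planes would be killed using condition (ii) together with \eqref{eq:semm0}.
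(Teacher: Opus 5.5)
\textbf{There is a genuine gap: Step~2 is never proved, and the mechanism you propose for it cannot work.}

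Your Step~1 is correct. With $H=-u^{-1}\Hess_hu$, the mixed horizontal--vertical block of \eqref{eq:semm1} does give $HJ+JH=\frac{\sigma}{2}J$ on $E$, and its trace reproduces Proposition~\ref{prop:trace-equation}. But Step~2 is the whole content of the proposition. Step~1 only says that $H-\frac{\sigma}{4}$ anticommutes with $J$. On each $J$-invariant plane this still allows an arbitrary traceless symmetric $2\times2$ block.

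To isolate $H_{p_ap_a}-H_{q_aq_a}$ through $D_{ab}$, you need the vertical pair in the role of $b$. However, the proof of Lemma~\ref{lem:pairwise-diag-full} divides by $\lambda_b$. With $\lambda_b=0$, \eqref{eq:S1-full} and \eqref{eq:S3-full} collapse to $0=0$, since the terms $R_{1324}$ and $R_{2314}$ vanish by \eqref{eq:wp-mixed}. With the vertical pair in the role of $a$, they only return the block-trace condition $H_{pp}+H_{qq}=\frac{\sigma}{2}$ for the horizontal pair. More generally, every equality condition you use is evaluated in frames adapted to $\omega$. Such conditions see only traces of $H$ over $J$-invariant planes, so they are blind to exactly the traceless part you need to kill. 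Your suggestions via \eqref{eq:semm0} and condition (ii) are not carried out.

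\textbf{The missing idea} is to use the $\sigma$-PIC \emph{inequality} itself on frames that are not adapted to $\omega$. Take an arbitrary $h$-orthonormal horizontal pair $X,Y$ and a $g_F$-orthonormal vertical pair $V,W$. Then the frame $\{X,Y,u^{-1}V,u^{-1}W\}$ is $g$-orthonormal, and $R_g(X,Y,u^{-1}V,u^{-1}W)=0$ by \eqref{eq:wp-HHVV}. Using \eqref{eq:wp-HV}, PIC becomes $\Hess_hu(X,X)+\Hess_hu(Y,Y)\le-\frac{\sigma}{2}u$. Thus the eigenvalues of $\Hess_hu$ satisfy $\mu_i+\mu_j\le-\frac{\sigma}{2}u$ for $i\ne j$. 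Summing over the $r(2r-1)$ pairs gives $(2r-1)\Delta_hu\le-r(2r-1)\frac{\sigma}{2}u$, which is an equality by Proposition~\ref{prop:trace-equation}. Hence every pairwise inequality is an equality. Since $2r\ge4$ gives at least three eigenvalues, they all equal $-\frac{\sigma}{4}u$. This is the paper's argument, and it makes Step~1 unnecessary.

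\textbf{Alternatively, your hunch about \eqref{eq:semm2} can be completed,} but with two \emph{horizontal} pairs rather than a vertical one.
\begin{itemize}
\item For two horizontal pairs $\lambda_b\ne0$, so $D_{ab}=0$. For unit $v\in E_\mu$ with $I=\mu^{-1}J$, combining this with \eqref{eq:wp-HV} gives $\Ric(v,v)-\Ric(Iv,Iv)=(n-2r)\bigl(H(v,v)-H(Iv,Iv)\bigr)$.
\item Both $\Ric$ and $H$ preserve the $J^2$-eigenspaces, by Lemma~\ref{lem:full-rank-algebraic-reduction} and Step~1 respectively. So polarization yields $[\Ric,J]=(n-2r)[H,J]$ on $E$.
\item On the other hand, $\nabla_X\alpha=\frac{2}{\sigma}(JHX)^\flat$ for horizontal $X$. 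Then \eqref{eq:semm2} together with Step~1 gives $[\Ric,J]=(n-2)[H,J]$.
\item Comparing the two, $(r-1)[H,J]=0$. Since $r>1$, $[H,J]=0$, and Step~1 then forces $H=\frac{\sigma}{4}$.
\end{itemize}
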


\begin{proof}
Let $X,Y$ be any $h$-orthonormal horizontal pair. Since $2r<n$ and both $n$
and $2r$ are even, the vertical $F$-factor has dimension at least two. Choose any
$g_F$-orthonormal vertical pair $V,W$. Then $\{ X,\, Y,\, u^{-1}V,\, u^{-1}W\}$
is a $g$-orthonormal four-frame.

Applying the ambient $\sigma$-PIC condition to this frame gives
\[
\begin{aligned}
\sigma
&\le
R_g(X,u^{-1}V,X,u^{-1}V)
+
R_g(X,u^{-1}W,X,u^{-1}W)\\
&\quad
+
R_g(Y,u^{-1}V,Y,u^{-1}V)
+
R_g(Y,u^{-1}W,Y,u^{-1}W)\\
&\quad
-
2R_g(X,Y,u^{-1}V,u^{-1}W).
\end{aligned}
\]
Using the warped-product formulas \eqref{eq:wp-HV} and
\eqref{eq:wp-HHVV}, this becomes
\[
        \sigma
        \le
        -\frac{2}{u}
        \left(
        \Hess_hu(X,X)+\Hess_hu(Y,Y)
        \right).
\]
Thus, for every $h$-orthonormal horizontal pair $X,Y$,
\begin{equation}\label{eq:pairwise-hessian-ineq}
        \Hess_hu(X,X)+\Hess_hu(Y,Y)
        \le
        -\frac{\sigma}{2}u.
\end{equation}

At a point, let $\mu_1,\ldots,\mu_{2r}$ be the eigenvalues of
$\Hess_hu$. By \eqref{eq:pairwise-hessian-ineq},
$\mu_i+\mu_j\le-\sigma u/2$ for $i\ne j$. Summing over all pairs
and using Proposition~\ref{prop:trace-equation} gives equality in the
sum, so every pair inequality is an equality. Since $2r>2$, this forces
$\mu_i=-\sigma u/4$ for every $i$.
\end{proof}

\begin{proposition}
\label{prop:intermediate-special}
Suppose that $2< 2r<n$. On $\wt M_0$, we have
\begin{equation}\label{eq:special-intermediate}
        \nabla_X\alpha=\frac12\iota_X\omega
\end{equation}
for every vector field $X$.
\end{proposition}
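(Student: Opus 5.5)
The plan is to follow the full-rank argument: prove $[\Ric,J]=0$ on $\wt M_0$ and then read off \eqref{eq:special-intermediate} from \eqref{eq:semm2}. Since $n-2>0$ and $\sigma>0$, \eqref{eq:semm2} reduces \eqref{eq:special-intermediate} exactly to the pointwise identity $\Ric\circ J=J\circ\Ric$. I will work in a local product neighborhood from Proposition~\ref{prop:local-splitting}, with $g=h+u^2g_F$, and use Proposition~\ref{prop:pure-hessian} to compute the Ricci tensor of this warped product.

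\textbf{Step 1: block structure of $\Ric$.} The O'Neill formulas for a warped product with base $(B^{2r},h)$ and fiber dimension $k=n-2r$ give
\[
\Ric(X,Y)=\Ric_h(X,Y)-\frac{k}{u}\Hess_hu(X,Y),\qquad \Ric(X,V)=0,
\]
for horizontal $X,Y$ and vertical $V$. Substituting \eqref{eq:pure-hessian}, this becomes
\[
\Ric|_E=\Ric_h+\frac{k\sigma}{4}h .
\]
Thus $\Ric$ preserves both $E$ and $K$. Since $J$ vanishes on $K$ and preserves $E$, we get $[\Ric,J]=0$ on $K$ automatically. It remains to show that $\Ric_h$ commutes with $J|_E$.

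\textbf{Step 2: $[\Ric,J]=0$ on $E$.} I would redo Lemmas~\ref{lem:pairwise-diag-full} and~\ref{lem:full-rank-ricci-commutes} with the sum restricted to horizontal indices.

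\begin{itemize}
\item \emph{Horizontal pairs.} Equations \eqref{eq:S1-full} and \eqref{eq:S3-full} come from \eqref{eq:semm1}, which is purely algebraic. They therefore hold for any two blocks $a\ne b$ with $\lambda_a,\lambda_b\ne0$, which gives $D_{ab}=0$ for all horizontal block pairs.

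\item \emph{Lemma~\ref{lem:full-rank-algebraic-reduction}.} Its proof is also algebraic, so $[\Ric,J^2]=0$ still holds. Consequently $\Ric$ preserves each eigenspace $E_\mu\subset E$ of $J^2$ with $\mu>0$.

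\item \emph{Vertical contributions.} For $v\in E_\mu$ a unit vector and $e_1=v$, $e_2=Iv$, the difference $\Ric(v,v)-\Ric(Iv,Iv)$ also receives terms from vertical frame vectors. By \eqref{eq:wp-HV} these terms equal $-u^{-1}(\Hess_hu(v,v)-\Hess_hu(Iv,Iv))$ for each vertical direction. By \eqref{eq:pure-hessian} each such term is zero.

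\item \emph{Conclusion.} Hence $\Ric(v,v)=\Ric(Iv,Iv)$, and polarization gives $[\Ric,J]=0$ on $E_\mu$, exactly as in Lemma~\ref{lem:full-rank-ricci-commutes}.
\end{itemize}

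Combined with Step 1, this yields $[\Ric,J]=0$ on the product neighborhood. Then \eqref{eq:semm2} gives \eqref{eq:special-intermediate} there, and hence on all of $\wt M_0$, since such neighborhoods cover it.

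The main point to check carefully is that the curvature identities \eqref{eq:S1-full} and \eqref{eq:S3-full} really involve only the four indices $p_a,q_a,p_b,q_b$ and not the kernel directions. Evaluating \eqref{eq:semm1} on $e_{p_a}\wedge e_{q_b}$ and taking the $e_{p_a}\wedge e_{p_b}$ coefficient, $J$ contributes only through $\lambda_a,\lambda_b$. Terms of the form $\R$ applied to a kernel bivector appear only with a $J$ factor acting on it, which vanishes. So the derivation carries over verbatim.

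If this step turned out to be delicate, the fallback is Step 1 alone. It reduces the claim to $[\Ric_h,J|_E]=0$ on the base, and \eqref{eq:semm0} and \eqref{eq:semm1} restricted to $\Lambda^2E$ could then be used directly there, with the vertical contributions controlled by \eqref{eq:wp-HHVV} and \eqref{eq:pure-hessian}.
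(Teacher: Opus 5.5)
Your argument is correct, but it takes a different route from the paper. The paper never passes through $[\Ric,J]=0$ in the intermediate-rank case. Instead it argues as follows:
\begin{enumerate}[(i)]
\item It uses \eqref{eq:dlogu-general} to write $Z=-\frac{2}{\sigma u}\nabla_hu$.
\item It feeds the pure Hessian \eqref{eq:pure-hessian} into this to get $\nabla_XZ=\frac12X-\frac{X(u)}{u}Z$ for horizontal $X$, and $\nabla_VZ=\frac{Z(u)}{u}V$ for vertical $V$.
\item It differentiates $\alpha=\iota_Z\omega$ directly, using $\nabla_X\omega=-\frac{\sigma}{2}X^\flat\wedge\alpha$ and $\alpha(Z)=0$. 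The terms $\pm\frac{X(u)}{u}\alpha$ cancel and $\nabla_X\alpha=\frac12\iota_X\omega$ drops out.
\end{enumerate}

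You instead reuse the full-rank mechanism. The identities $[\Ric,J^2]=0$ and $D_{ab}=0$ are purely algebraic. You correctly observed that \eqref{eq:S1-full} and \eqref{eq:S3-full} involve only the indices $p_a,q_a,p_b,q_b$ and need only $\lambda_a,\lambda_b\ne0$. The only new contributions to $\Ric(v,v)-\Ric(Iv,Iv)$ come from the $n-2r$ kernel directions. By \eqref{eq:wp-HV} they total $-(n-2r)u^{-1}\bigl(\Hess_hu(v,v)-\Hess_hu(Iv,Iv)\bigr)$, which vanishes by \eqref{eq:pure-hessian}. Then \eqref{eq:semm2} converts $[\Ric,J]=0$ into the claim.

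What your route buys is uniformity. It shows $[\Ric,J]=0$ holds in every rank $2r>2$, and it isolates exactly where $r>1$ enters. It only uses $\Hess_hu(v,v)=\Hess_hu(Iv,Iv)$ for $v\in E_\mu$. When $r=1$ this would amount to $\Hess_hu$ being pure trace, which Proposition~\ref{prop:pure-hessian} does not provide.

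The paper's route is shorter and more elementary here. It relies only on the first-order identities for $\Theta$, $u$ and $Z$, rather than on Semmelmann's curvature identities \eqref{eq:semm1} and \eqref{eq:semm2}.

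Your Step~1 is redundant. $[\Ric,J^2]=0$ already forces $\Ric$ to preserve $K=\ker J^2$ and $E$, so the O'Neill Ricci formula is not needed.
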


\begin{proof}
Work in a local product neighborhood.
From \eqref{eq:dlogu-general},
\[
        Z=-\frac{2}{\sigma u}\nabla_hu.
\]
Using \eqref{eq:pure-hessian} in Proposition~\ref{prop:pure-hessian}, we compute 
\begin{equation}\label{eq:nabla-Z-intermediate}
\begin{aligned}
        \nabla_X^gZ
        =
        \nabla_X^hZ  
        &=
        -\frac{2}{\sigma}
        \left(
        \frac{1}{u}\nabla_X^h\nabla_hu
        -
        \frac{X(u)}{u^2}\nabla_hu
        \right) \\
        &=
        \frac12 X-\frac{X(u)}{u}Z .
\end{aligned}
\end{equation}
for every horizontal vector field $X$.

Since $\alpha=\iota_Z\omega$, we have $\alpha(Z)=0$.
For horizontal $X$, \eqref{eq:dlogu-general} gives
$X(u)/u=-\sigma\ip{X}{Z}/2$, so
\[
\begin{aligned}
\nabla_X\alpha
&=\iota_{\nabla_XZ}\omega+\iota_Z\nabla_X\omega\\
&=\frac12\iota_X\omega-\frac{X(u)}{u}\alpha
-\frac{\sigma}{2}\bigl(\ip{X}{Z}\alpha-\alpha(Z)X^\flat\bigr)\\
&=\frac12\iota_X\omega-\frac{X(u)}{u}\alpha
+\frac{X(u)}{u}\alpha
=\frac12\iota_X\omega.
\end{aligned}
\]

Now let $V$ be vertical. The warped-product connection gives
\[
        \nabla_VZ=\frac{Z(u)}{u}V,
\]
which is vertical. Since $V\in K$ and $\ip{V}{Z}=\alpha(Z)=0$,
\[
\begin{aligned}
\nabla_V\alpha
&=\iota_{\nabla_VZ}\omega+\iota_Z\nabla_V\omega\\
&=0-\frac{\sigma}{2}
\bigl(\ip{V}{Z}\alpha-\alpha(Z)V^\flat\bigr)
=0=\frac12\iota_V\omega.
\end{aligned}
\]

Combining the horizontal and vertical cases proves \eqref{eq:special-intermediate} on each product neighborhood.
\end{proof}

\subsection{Global rigidity}

\begin{lemma}\label{lem:density-max-rank}
$\wt M_0$ is dense in $\wt M$.
\end{lemma}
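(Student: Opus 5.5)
The plan is to show that $\Theta=\omega^r$ is itself a (closed) eigenform of the Hodge Laplacian, and then to apply unique continuation. Note that $\wt M\setminus\wt M_0=\{\omega^r=0\}$, because the rank of $\omega$ is at most $2r$ everywhere. So density of $\wt M_0$ is equivalent to saying that $\omega^r$ cannot vanish on a nonempty open set. Since $\omega^r$ is a nonlinear expression in $\omega$, we cannot apply unique continuation to $\omega$ directly. The point is that the equality equations \eqref{eq:equality-basic} make $\omega^r$ satisfy a \emph{linear} elliptic equation.

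\textbf{Step 1: closedness and the codifferential of $\omega^k$.} For each $k\ge1$, $\dd\omega^k=0$ since $\dd\omega=0$. From $\nabla_X\omega=-\frac{\sigma}{2}X^\flat\wedge\alpha$ and the derivation rule we get, as in the proof of Lemma~\ref{lem:Theta-u-identities},
\[
\nabla_X\omega^k=-\frac{k\sigma}{2}X^\flat\wedge\alpha\wedge\omega^{k-1}.
\]
Then $\dstar\omega^k=-\sum_i\iota_{e_i}\nabla_{e_i}\omega^k$. For a form $\gamma$ of degree $p$ we have $\sum_i\iota_{e_i}(e_i^\flat\wedge\gamma)=(n-p)\gamma$. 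Applying this with $\gamma=\alpha\wedge\omega^{k-1}$, which has degree $2k-1$, gives
\[
\dstar\omega^k=\frac{k\sigma}{2}(n-2k+1)\,\alpha\wedge\omega^{k-1}.
\]

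\textbf{Step 2: eigenform equation.} Using $\dd\alpha=\omega$ and $\dd\omega=0$, we get $\dd(\alpha\wedge\omega^{k-1})=\omega^k$. Therefore
\[
\Delta_H\omega^k=\dd\dstar\omega^k=c_k\,\omega^k,\qquad c_k=\frac{k\sigma}{2}(n-2k+1).
\]
In particular $\Theta=\omega^r$ solves $(\Delta_H-c_r)\Theta=0$ on $\wt M$. This operator has the Laplace-type principal symbol $|\xi|^2\id$, acting diagonally.

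\textbf{Step 3: unique continuation.} Suppose, for contradiction, that $\wt M_0$ is not dense. Then $\omega^r$ vanishes on a nonempty open set $W$. By Aronszajn's unique continuation theorem for second-order elliptic systems with diagonal Laplacian principal part and smooth coefficients (this covers $\nabla^*\nabla+\Q-c_r$ via the Bochner formula), $\omega^r$ vanishes identically on the connected manifold $\wt M$. This contradicts the definition of $2r$ as the maximal rank attained somewhere. Hence $\wt M_0$ is dense.

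\textbf{Main obstacle.} The main step to get right is the computation in Step 1. One must check carefully that the contraction constant is $n-(2k-1)$, and that no term involving $\nabla\alpha$ appears. Such a term would ruin linearity, but it does not arise here, because only $\nabla\omega$ enters the computation of $\dstar\omega^k$. The unique continuation input is standard. One only needs to note that it applies to form-valued solutions of Laplace-type equations with smooth (not necessarily analytic) coefficients.
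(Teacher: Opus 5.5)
Your proposal is correct and follows the paper's proof essentially step for step. You derive $\dstar\omega^r=\frac{r\sigma}{2}(n-2r+1)\,\alpha\wedge\omega^{r-1}$ from the equality equations, conclude that the closed form $\omega^r$ satisfies $\Delta_H\omega^r=\frac{r\sigma}{2}(n-2r+1)\,\omega^r$, and finish with unique continuation for Laplace-type systems on the connected manifold $\wt M$. The only quibble is notational and does not affect the argument: for the $2r$-form $\omega^r$, the zeroth-order Weitzenb\"ock term is the general curvature endomorphism on $2r$-forms, not the two-form operator $\Q$.
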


\begin{proof}
Note that $\wt M_0$ is exactly $\{x\in\wt M:\omega_x^r\ne0\}$.
Using \eqref{eq:equality-basic} and contracting in a local orthonormal frame,
\[
\begin{aligned}
\nabla_X(\omega^r)
&=-\frac{r\sigma}{2}X^\flat\wedge\alpha\wedge\omega^{r-1},\\
\dstar(\omega^r)
&=\frac{r\sigma}{2}\sum_i
\iota_{e_i}(e_i^\flat\wedge\alpha\wedge\omega^{r-1})\\
&=\frac{r\sigma}{2}(n-2r+1)\alpha\wedge\omega^{r-1},
\end{aligned}
\]
because $\alpha\wedge\omega^{r-1}$ has degree $2r-1$.
Applying $\dd$ and using $\dd\omega=0$ and $\dd\alpha=\omega$ gives
\[
        \Delta_H(\omega^r)
        =
        \frac{r\sigma}{2}(n-2r+1)\omega^r.
\]
Thus $\omega^r$ is a smooth solution of an elliptic equation, of Laplace type and hence with scalar principal symbol $\norm{\xi}^2$.
If $\wt M_0$ were not dense, then $\omega^r$ would vanish on a nonempty open
set. By weak unique continuation for elliptic systems of Laplace type \cite{Aronszajn1957,AKS1962}, this
would imply $\omega^r\equiv0$ on $\wt M$.
This contradicts the definition of $2r$ as the maximal rank of $\omega$.
Therefore $\wt M_0$ is dense.
\end{proof}

\begin{proof}[Proof of the rigidity assertion in Theorem~\ref{thm:spectral-rigidity}]
Recall that $2r=\max_{\wt M}\rank\omega$.
By assumption $2r>2$. Propositions~\ref{prop:full-rank-special} and \ref{prop:intermediate-special} give $\nabla_X\alpha=\frac12\iota_X\omega$ on $\wt M_0$.
By Lemma~\ref{lem:density-max-rank}, the set $\wt M_0$ is dense in $\wt M$.
By continuity,
\begin{equation}\label{eq:special-global}
        \nabla_X\alpha=\frac12\iota_X\omega
        \qquad\text{on }\wt M.
\end{equation}
Since $\omega=\dd\alpha$, the equality equation $\nabla_X\omega=-\frac{\sigma}{2}X^\flat\wedge\alpha$ becomes
\begin{equation}\label{eq:special-global-second}
        \nabla_X\dd\alpha
        =
        -\frac{\sigma}{2}X^\flat\wedge\alpha .
\end{equation}
On $\wt M_0$, we have $\alpha^\sharp\in(\ker\omega)^\perp$:
this is automatic in full rank and follows from
Lemma~\ref{lem:alpha-kernel-general} otherwise.
If $\norm{\alpha}^2$ were constant, then
\[
        0=X(|\alpha|^2)
        =2(\nabla_X\alpha)(\alpha^\sharp)
        =
        \omega(X,\alpha^\sharp)
\]
for every vector field $X$. Thus $\alpha^\sharp\in\ker\omega$ on $\wt M_0$, so $\alpha=0$
on this nonempty open set. Then $\omega=\dd\alpha=0$ there,
contrary to $\rank\omega=2r>0$.
Hence $\norm{\alpha}^2$ is nonconstant.

Set $\widehat g=(\sigma/4)g$. The Levi-Civita connection is unchanged and
$X^{\flat_{\widehat g}}=(\sigma/4)X^\flat$, so
\eqref{eq:special-global} and \eqref{eq:special-global-second} become
\[
\nabla_X\alpha=\frac12\iota_X\dd\alpha,\qquad
\nabla_X\dd\alpha=-2X^{\flat_{\widehat g}}\wedge\alpha.
\]
Moreover, $\norm{\alpha}_{\widehat g}^2=(4/\sigma)\norm{\alpha}_g^2$ is nonconstant.
Thus $\alpha$ is a special Killing one-form with the normalized coefficient $-2$.
The universal cover $(\wt M,\widehat g)$ is complete and simply connected.
By the nonconstant-length case of
\cite[Proposition~3.1.3]{Semmelmann2001}, $(\wt M,\widehat g)$ is the unit round sphere.
Rescaling gives
\[
(\wt M,g)\cong S^n\!\left(\frac{2}{\sqrt{\sigma}}\right).
\]

Let $\Gamma$ be the deck transformation group of
$\pi:\wt M\to M$. Since $\wt M$ is compact, $\Gamma$ is finite. Using that $n$ is even, multiplicativity of the Euler
characteristic under finite coverings gives
\[
        2=\chi(S^n)=|\Gamma|\,\chi(M).
\]
Hence $|\Gamma|=1$ or $|\Gamma|=2$. If $|\Gamma|=2$, the nontrivial deck transformation is a
fixed-point-free involutive isometry of the round sphere. Viewed as an
orthogonal involution of $\mathbb R^{n+1}$, it has no $+1$-eigenvector and
therefore equals $-\textup{Id}$. Thus $(M,g)$ is isometric either to the round
sphere or to the round real projective space of sectional curvature
$\sigma/4$. This proves the rigidity assertion in Theorem~\ref{thm:spectral-rigidity}.
\end{proof}

\end{document}